\providecommand{\U}[1]{\protect\rule{.1in}{.1in}}
\let\orgdescriptionlabel\descriptionlabel
\renewcommand*{\descriptionlabel}[1]{%
	\let\orglabel\label
	\let\label\@gobble
	\phantomsection
	\edef\@currentlabel{#1}%
	\let\label\orglabel
	\orgdescriptionlabel{#1}%
}
\theoremstyle{definition}
\newtheorem{theorem}{Theorem}[section]
\newtheorem{lemma}{Lemma}[section]
\newtheorem{remark}{Remark}[section]
\newtheorem{definition}{Definition}[section]
\newtheorem*{theorem*}{Theorem}
\newtheorem{corollary}{Corollary}[section]
\numberwithin{equation}{section}
\newcommand{\abs}[1]{\lvert#1\rvert}
\DeclareMathAlphabet{\mathpzc}{OT1}{pzc}{m}{it}
\newcommand{\ftt}[1] {\mathsf{#1}}
\newcommand{\pss}{\mathrm{({PS}})_c}
\newcommand{\dit}{\displaystyle{\int}}
\newcommand{\va}{\varphi}
\newcommand{\cs}{continuous }
\def\derat#1{{d \over dt} \hbox{\vrule width0.5pt 
		height 5mm depth 3mm${{}\atop{{}\atop{\scriptstyle t=#1}}}$}}
\newcommand{\dd}{{\tt D}}
\newcommand{\dt}[1]{{\tt d}{#1}}
\newcommand{\fs}[1]{\mathbbm {#1}}
\newcommand{\eu}[1]{\EuScript {#1}}
\newcommand\Set[2]{\{\,#1\mid#2\,\}}
\newcommand{\lc}{\mathsf{L}}
\newcommand{\mt}{\mathbbm {d}}
\newcommand{\set}[1]{\left\{#1\right\}}
\newcommand{\snorm}[2][]{\left\lVert#2\right\rVert_{#1}}
\newcommand{\Sem}[1]  {\textsf{Sem}(#1)}
\newcommand{\zero}[1]{\mathbf{0}_{#1}}
\newcommand{\rr}{\mathbb{R}}
\newcommand{\nn}{\mathbb{N}}
\newcommand{\vv}{\mathcal{V}}
\newcommand{\bl}[1] {\mathbf {#1}}
\DeclareMathOperator{\Lip}{Lip_{loc}}
\newcommand{\bb}{\mathcal{B}}
\DeclareMathAlphabet\EuScript{U}{eus}{m}{n}
\SetMathAlphabet\EuScript{bold}{U}{eus}{b}{n}
\newcommand\opn{\ensuremath{\mathrel{\mathpalette\opncls\circ}}}
\newcommand{\opncls}[2]{
	\ooalign{$#1\subseteq$\cr
		\hidewidth\raisefix{#1}\hbox{$#1{\stylefix{#1}#2}\mkern2mu$}\cr}}
\def\raisefix#1{
	\ifx#1\displaystyle
	\raise.39ex
	\else
	\ifx#1\textstyle
	\raise.39ex
	\else
	\ifx#1\scriptstyle
	\raise.275ex
	\else
	\raise.150ex
	\fi
	\fi
	\fi
}
\def\stylefix#1{
	\ifx#1\displaystyle
	\scriptstyle
	\else
	\ifx#1\textstyle
	\scriptstyle
	\else
	\ifx#1\scriptstyle
	\scriptscriptstyle
	\else
	\scriptscriptstyle
	\fi
	\fi
	\fi
}
\DeclareFontFamily{U}{mathx}{\hyphenchar\font45}
\DeclareFontShape{U}{mathx}{m}{n}{
	<5> <6> <7> <8> <9> <10>
	<10.95> <12> <14.4> <17.28> <20.74> <24.88>
	mathx10
}{}
\newcommand{\fr}{Fr\'{e}chet }
\newcommand{\bo}{\mathbf{B}^{\mathrm{op}}}
\newcommand{\pc}{\partial_{\mathrm{c}}}
\newcommand{\Cl}[1]{\overline{#1}}
\newcommand{\Id}{\mathop\mathrm{Id}\nolimits}
\renewcommand{\emptyset}{\varnothing}
\begin{document}

\title{Global Implicit Function Theorems and critical point theory in Fr\'{e}chet Spaces}

\author{Kaveh Eftekharinasab}
\thanks{This work was supported by grants from the Simons Foundation (1030291, 1290607, K.A.E)}
\address{Address: Topology lab.  Institute of Mathematics of National Academy of Sciences of Ukraine, 
	Tereshchenkivska st. 3,  01024, Kyiv, Ukraine}

\email{kaveh@imath.kiev.ua}
\thanks {}

\subjclass[2020]{Primary  	58C15, 58E05. Secondary  57R70,  46A04.}

\keywords{Fr\'{e}chet spaces; Keller's $ C_c^k $-differentiability; Clarke subdifferential; Global implicit function theorem; Mountain pass theorem; Palais-Smale condition; Lagrange multiplier theorem.}

\begin{abstract}
We prove two versions of a global implicit function theorem, which involve no loss of derivative, for Keller's $ C_c^1 $-mappings between arbitrary \fr spaces. Subsequently, within this framework, we apply these theorems to establish the global existence and uniqueness of solutions to initial value problems that involve the loss of one derivative. Moreover, we prove a Lagrange multiplier theorem by employing indirect applications of the global implicit function theorems through submersions and transversality. 
\end{abstract}

\maketitle
\section{Introduction}
Consider a nonlinear equation of the form:
\begin{equation}\label{eq:1}
\upphi (\ftt{e},\ftt{g})= \bl{0},
\end{equation}
where $ \mathtt{e}, \mathtt{g}$, and $\bl{0} $ belong to arbitrary \fr spaces, and $ \bl{0} $ represents the zero element. Our aim is to establish  sufficient conditions under which it is possible to  globally and uniquely solve Equation \eqref{eq:1} for $ \ftt{g} $ in terms of $ \ftt{e} $, with the solution mapping  $ \eu{K} $ being differentiable, such that  $ \upphi $ does not lose the derivative: $ \upphi $ and $ \eu{K} $ belong to the same class of differentiability. 

Such results are known as global implicit function theorems and have been extensively studied and applied in various frameworks, including purely topological settings, finite and infinite-dimensional Banach spaces. 
It would be impossible to provide a comprehensive account of the developments in these spaces, but we may refer to \cite{bolt, san, ish} for early contributions and to the survey \cite{gu} and its references for recent developments.

Interest in the broader context of \fr spaces has only recently started to gain traction. In \cite{k3,k4},  global inversion theorems, which are closely related to global implicit function theorems, have been obtained in these spaces. In \cite{k3}, the result is closely linked to the Nash-Moser implicit function theorem, necessitating mappings to be at least twice continuously differentiable,
and \fr spaces to be tame. However,  
in \cite{k3}, we consider  arbitrary \fr spaces, and mappings are  required to be only continuously differentiable, provided they are also local diffeomorphisms.

Motivated by these results and recent developments in critical point theory in \fr spaces (\cite{k1,k2,k3}),  this paper aims  to derive global implicit  function theorems,  which involve no loss of derivative, applicable to  arbitrary \fr space for mappings which are only continuously differentiable by employing methods of critical point theory.

In \cite{k}, it has been elegantly demonstrated how a mountain pass theorem can establish a global homeomorphism theorem in purely topological spaces. Subsequently, this method has been applied to derive global inversion and implicit function theorems in Hilbert spaces (\cite{id,id2}) and Banach spaces (\cite{ga,ga1}). Inspired  by the Banach case in \cite{ga}, we will further apply this approach to prove global implicit function theorems in \fr spaces.

First, let's outline the key features of the generalization. One significant distinction between \fr and Banach contexts, frequently encountered, lies in the topology: while a Banach space's topology is determined by a single norm, a \fr space's topology is defined by a family of seminorms. Consequently, growth estimates, which are crucial for formulating assumptions and proofs in many situations in the Banach setting, become unavailable in \fr spaces because the objects under consideration are not directly comparable.

Another concern is that, unlike Banach spaces, \fr spaces exhibit many non-equivalent notions of differentiability. 
Among them, we believe that Keller's $C_c^k$-differentiability  is the most suitable notion for our objectives.
This differentiability is equivalent to the well-developed Michael-Bastiani differentiability and facilitates the application of a crucial tool in critical point theory in infinite dimensional spaces--the Palais-Smale condition. 

By employing this notion of differentiability, we prove two slightly different versions of a global implicit function theorem: Theorem \ref{th:gift1} and Theorem \ref{th:gift2}. A key assumption in these theorems is that, for a mapping $ \va: \fs{E} \to \fs{F} $, at each point $ f \in \fs{F} $,  we associate a specific functional $  \eu{J}_f: \fs{E} \to \rr$ that satisfies a compactness condition of Palais-Smale type. In Theorem \ref{th:gift1}, this functional is assumed to be locally Lipschitz; however, in Theorem \ref{th:gift2}, it is Keller's $ C_c^1$. 
One reason for such considerations is that while the class of locally Lipschitz functions is broader compared to Keller's $C_c^1$-mappings, the non-smooth analysis of such functions is more technical than Keller's $C_c^1$-differentiability.

An implication of these theorems is a global inversion theorem, which is presented in Theorem \eqref{th:inv}.
The rest of the paper concerns the applications, starting with addressing the following initial value problem that involves the loss of one derivative:
\begin{equation}\label{eq:las}
y'(t)= \upphi ( t, y(t), e ),
\end{equation} 
where $\upphi$ is a Keller's $C_c^1$-mapping, and the initial conditions are fixed both in time and in arbitrary \fr spaces. 
In Theorem \ref{th:ivp}, using  Theorems \ref{th:gift1} and \ref{th:gift2}, we establish sufficient conditions for the global existence and uniqueness of the solution over the entire time interval. 

It is worth mentioning that the ODE \eqref{eq:las} is one of the significant and challenging problems in \fr spaces and has been considered in many works, see \cite{p}, \cite{vanthera}, and \cite{ivan}. However, the available results have been obtained under rather restrictive conditions. In contrast, our result is valid for arbitrary \fr spaces, where the mappings are only assumed to be continuously differentiable. The only assumption that may seem restrictive is the Palais-Smale condition, which could be challenging to verify in practice.

We then apply the global implicit function theorems to develop critical point theory in \fr spaces, with the objective of deriving a Lagrange multiplier theorem. In this regard, we will employ submersions and transversality. 

A Lagrange multiplier method involves finding critical points of a mapping subject to a set of constraints, which typically form a differentiable submanifold of an ambient space.

Applying submersions and transversality is a common approach to constructing submanifolds in the Banach case. However,
expanding beyond the Banach setting necessitates a robust concept of submersion to extend the typical assertions regarding submersion to manifolds modeled over locally convex spaces. For such manifolds, various non-equivalent mappings, including infinitesimally surjective, na\"{i}ve submersion, and submersion, are available for constructing submanifolds, see \cite{sh}.
In \cite{go}, submersions have been utilized to construct submanifolds in the case of manifolds modeled on locally convex spaces.
However, we will not use the strong notion of submersion as in \cite{go}. Instead, we will employ infinitesimally surjective mappings such that their tangent maps have split kernels. This concept coincides with the notion of submersion in the Banach setting. We also adapt the definition of transversality from the Banach setting.

For the mappings mentioned earlier, we propose a condition that involves the Palais-Smale condition (condition \eqref{s.1}), under which the mappings will, roughly speaking,  look like projections over their domains, as stated in Theorem \eqref{th:gst}. This condition plays a central role in formulating and proving a Lagrange multiplier theorem. Theorem \eqref{th:gst} has significant implications, including a preimage theorem (Corollary \ref{cor:1}), a transversality result (Corollary \ref{cor:2}), and a Lagrange multiplier theorem (Theorem \ref{th:lm}). 

Towards the end of the paper, we employ the Lagrange multiplier theorem to provide a brief generalization of the Nehari method for locating critical points of Keller's $ C_c^1 $-functionals. Additionally, we prove a theorem (Theorem \ref{th:ls}) that is particularly useful for identifying critical points of unbounded functionals. 

\section{Differentiability}
Throughout this paper, we assume that $ (\fs{F},\Sem{\fs{F}} )$ and $ (\fs{E},\Sem{\fs{E}} )$ are \fr spaces over $ \rr $, where $\Sem{\fs{F}}=\Set{\snorm[\fs{F},n]{\cdot}}{n \in \nn} $ and $\Sem{\fs{E}}=\Set{\snorm[\fs{E},n]{\cdot}}{n \in \nn} $ are families of continuous seminorms defining the topologies of $ \fs{F} $ and $ \fs{E} $, respectively, each directed by its natural order. We also use the translation-invariant metric
\begin{equation} \label{metric}
\mt_{\fs{F}}(x,y) = \sum_{n=1}^{\infty}\dfrac{1}{2^n}\dfrac{\snorm[\fs{F},n]{x-y}}{1 + \snorm[\fs{F},n]{x-y}}
\end{equation}
that induces the same topology on $ \fs{F} $.
We denote by $ \zero{\fs{F}} $ and $ \bo_{\mt_\fs{F}} $  the origin and
the open unit $ \mt_\fs{F} $-ball of $\fs{F}$, respectively. The Cartesian product $ \fs{E} \times \fs{F} $ is a \fr space with
seminorms defined by
$\snorm[\fs{E} \times \fs{F}, n]{(x,y)} \coloneqq
\max \set{\snorm[\fs{E}, n]{x} , \snorm[ \fs{F}, n]{y}}.$

As mentioned in the introduction, we will require the non-smooth analysis of locally Lipschitz mappings. In \cite{k1}, the critical points theory for these mappings, generalizing the Clarke subdifferential, has been developed. Now, we will revisit what will be needed later on.

Let $ \langle \fs{F}, \fs{F}' \rangle $ be a dual pairing. The weak topology $ \sigma(\fs{F},\fs{F}') $ on $ \fs{F} $ is defined through the family of seminorms:
\begin{equation}
\snorm[ A]{y} \coloneqq \sup_{x_i \in A} \abs{ \langle x_i,y \rangle}, \quad \text{for}\, y \in \fs{F},
\end{equation}
where $ A $ ranges over the set of finite subsets of $ \fs{F}' $. 

We further define the weak$^*$-topology $ \sigma^*(\fs{F}',\fs{F}) $ on $ \fs{F}' $ by the family of seminorms: 
\begin{equation}\label{eq:wss}
\snorm[ B]{x} \coloneqq \sup_{y_i \in B} \abs{ \langle y_i,x \rangle}, \quad   \text{for}\, x \in \fs{F}',
\end{equation}
where $ B  $ ranges over the set of finite subsets of $ \fs{F} $. 

We denote by $ \Lip (\fs{F},\rr)$  the set of locally Lipschitz functionals on $ \fs{F} $. We will refer to the following definitions, which can be found in \cite{k1}. For $ \varphi \in \Lip (\fs{F},\rr)$,  the generalized directional  derivative $ \varphi^{\circ}(x,y)  $ at each $ x \in \fs{F} $ in the direction $ y \in \fs{F}$ is defined by
\begin{equation}\label{def:gg}
\varphi^{\circ}(x,y)  \coloneqq \limsup_{h \rightarrow x, t\downarrow 0}  \frac{\varphi(h+ty)-\varphi(h)}{t}, \quad \text{for}\,t \in \rr, h \in \fs{F}.
\end{equation}
Additionally, we denote by $ \eu{F} : X \rightrightarrows Y $ a set-valued function from a set $ X $ to a set $ Y $. Specifically, for every $ x \in X $, $ \eu{F}(x) $ represents a subset (possibly empty) of $ Y $.

The Clarke subdifferential of $ \varphi $ at $ x $ is a set-valued function $ \pc \varphi: \fs{F} \rightrightarrows \fs{F}' $ defined by
\begin{equation*}
\pc \varphi (x) \coloneqq \Big\{ x' \in \fs{F}' \mid  \langle x',y \rangle \leq \va^{\circ}(x,y), \,\, \forall y \in \fs{F}\Big\}.
\end{equation*}
The set $ \pc \varphi (x) $ is weak$^*$-compact, ensuring the well-definedness of the following function:
\begin{gather}\label{eq:chpsc}
\lambda_{\varphi,B}: \fs{F} \to \rr, \quad
\lambda_{\varphi,B}(x) = 
\min_{y \in \pc \va(x)}  \snorm[B]{y}.	
\end{gather}
Here, $ B $ is a finite subset of $ \fs{F}' $.

We will need the following properties of locally Lipschitz mappings.
\begin{lemma}[\cite{k1}, Lemma 1.1]\label{lm:clsubdi}
	Let $ \va, \psi \in  \Lip (\fs{F},\rr)$, and let $ x \in \fs{F} $.
	\begin{enumerate}[label=$ \bl{(CS\arabic*)} $,ref=CS\arabic*]
		\item \label{itm:cds.1}  $ y \in \pc \varphi(x) $ if and only if $ \varphi^{\circ}(x,h)\geq \langle y,h\rangle, \, \forall h \in \fs{F} $. 
		\item \label{itm:cds.2} If $ x $ is a local extrema of $ \varphi $, then $ \zero{\fs{F}'} \in \pc \varphi(x) $.
	\end{enumerate}
\end{lemma}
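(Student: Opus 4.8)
The plan is to deduce both items directly from the definitions recalled above, since these are the basic first-order facts about the Clarke subdifferential and, being pointwise in nature, they are insensitive to the fact that $\fs{F}$ carries a family of seminorms rather than a single norm.

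For item (CS1), I would simply unwind the definition of $\pc\va(x)$. By definition, $y\in\pc\va(x)$ precisely when $y\in\fs{F}'$ and $\langle y,h\rangle\le\va^{\circ}(x,h)$ for every $h\in\fs{F}$; this is exactly the claimed equivalence, so nothing beyond rereading the definition is required. The only subtlety worth flagging is that $\pc\va(x)$ is nonempty, which is what makes statements like (CS2) non-vacuous; this is ensured by the sublinearity of $h\mapsto\va^{\circ}(x,h)$ together with Hahn--Banach, as in \cite{k1}, and is implicitly used through the weak$^{*}$-compactness statement preceding the lemma.

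For item (CS2), the key step is to show that if $x$ is a local extremum of $\va$ then $\va^{\circ}(x,y)\ge 0$ for all $y\in\fs{F}$; granting this, the zero functional obeys $\langle\zero{\fs{F}'},y\rangle=0\le\va^{\circ}(x,y)$ for every $y$, so $\zero{\fs{F}'}\in\pc\va(x)$ by (CS1). To prove $\va^{\circ}(x,y)\ge 0$ when $x$ is a local minimum, fix $y$ and observe that for all small $t>0$ the point $x+ty$ lies in a neighbourhood of $x$ on which $\va(x)\le\va(\cdot)$, so $t^{-1}\bigl(\va(x+ty)-\va(x)\bigr)\ge 0$; restricting the limsup defining $\va^{\circ}(x,y)$ to the constant path $h\equiv x$ then gives $\va^{\circ}(x,y)\ge\limsup_{t\downarrow 0}t^{-1}\bigl(\va(x+ty)-\va(x)\bigr)\ge 0$. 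When $x$ is a local maximum the same conclusion follows by running the limsup along the path $h_t=x-ty\to x$, along which $t^{-1}\bigl(\va(h_t+ty)-\va(h_t)\bigr)=t^{-1}\bigl(\va(x)-\va(x-ty)\bigr)\ge 0$; equivalently, one replaces $\va$ by $-\va$ and uses $\pc(-\va)(x)=-\pc\va(x)$.

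I do not anticipate any real obstacle: the statements are foundational. The only point demanding a moment's care is that the limsup in the definition of $\va^{\circ}(x,y)$ is a joint limsup over the pair $(h,t)\to(x,0^{+})$, so evaluating it along a single admissible path such as $h\equiv x$ or $h_t=x-ty$ yields merely a lower bound for $\va^{\circ}(x,y)$ --- which is exactly the inequality direction needed for (CS2). Everything else (positive homogeneity and subadditivity of $h\mapsto\va^{\circ}(x,h)$, and the nonemptiness and weak$^{*}$-compactness of $\pc\va(x)$) is part of the groundwork laid in \cite{k1}.
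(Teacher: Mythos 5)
Your proof is correct and is the standard argument: (CS1) is just the definition of $\pc\va(x)$ restated, and (CS2) follows from $\va^{\circ}(x,y)\ge 0$ at a local extremum, obtained by evaluating the joint limsup along the admissible paths $h\equiv x$ (minimum) or $h_t=x-ty$ (maximum). The paper itself states this lemma by citation to \cite{k1} without reproducing a proof, and your argument matches the one given there.
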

Next, we define Keller's $C^k_c$-differentiability \cite{ke}. Throughout,
by $ U \opn \mathsf{T} $ we mean that $ U $
is an open subset of a topological space $ \mathsf{T} $.  If $ \mathsf{S}$ is another topological space, we denote by $ \eu{C} (\mathsf{T},\mathsf{S}) $ the set of  \cs mappings from  $ \mathsf{T} $ to $ \mathsf{S}$. Additionally,  $\lc(\fs{E},\fs{F})$ denotes the set of all continuous linear mappings from $\fs{E}$ to $\fs{F}$.

A bornology $\bb_{\fs{F}}$ on $\fs{F}$ is a  covering of $ \fs{F} $  that satisfies the following axioms:
\begin{enumerate}
	\item $\bb_{\fs{F}}$ is stable under finite unions.
	\item  If $ A \in \bb_{\fs{F}} $ and $ B \subseteq A$, then $ B \in \bb_{\fs{F}} $. 
\end{enumerate}
The compact bornology on $\fs{F}$, denoted by $\bb_{\fs{F}}^c$, consists of the family of relatively compact subsets of $\fs{F}$. This family is generated by the set of all compact subsets of $\fs{F}$, meaning that every $B \in \bb_{\fs{F}}^c$ is contained within some compact set.
Let $\bb_{\fs{E}}^c$ be the compact bornology on $ \fs{E} $.
We endow the vector space $\lc(\fs{E},\fs{F})$ with the $\bb_{\fs{E}}^c$-topology, which is the topology of uniform convergence on all compact subsets of $ \fs{E} $. This results in a Hausdorff locally convex topology defined by the family of  seminorms given by 
\begin{equation*}
\snorm[B,n]{L}   \coloneqq \sup_{e \in B } {\snorm[F,n]{L (e)}}, \quad \text{for}\, B \in \bb_{\fs{E}}^c,\, n \in \nn.
\end{equation*}	
Let $\varphi: U \opn \fs{E} \to \fs{F}$  be a mapping. If the directional  derivatives
\begin{equation*}
\varphi'(x)h = \lim_{ t \to 0} \dfrac{\varphi(x+th)-\varphi(x)}{t}
\end{equation*}
exist for all $x \in U$ and all $ h \in \fs{E} $, and  the induced map  $ \varphi'(x) : U \to \lc(\fs{E},\fs{F})$ is continuous for all
$x \in U$, then  we say that $ \varphi $ is of class Keller's $C_c^1 $, or simply a Keller's $C_c^1$-mapping. 
Higher-order derivatives are defined in usual manner.

For a continuous curve $ \gamma: I=(a,b) \to \fs{F}  $, we define its derivative as
\begin{equation*}
\gamma'(x) = \lim_{ t \to 0} \dfrac{\gamma(x+t)-\varphi(x)}{t}.
\end{equation*}
If the limit exists and is finite, and $ \gamma'(x) $ is continuous, we say that $ \gamma $ is $ C^1 $, a notion which coincides with  Keller's $C_c^1$-differentiability. If $ I=[a,b] $, the extension of the derivative by continuity of $ \gamma' $ to $ [a,b] $ has the values $ \gamma'(a) $ and $ \gamma'(b) $ equal to
\begin{equation*}
\gamma'(a) = \lim_{ t \downarrow 0} \dfrac{\gamma(a+t)-\varphi(a)}{t}, \quad \gamma'(b) = \lim_{ t \downarrow 0} \dfrac{\gamma(b)-\varphi(b-t)}{t}.
\end{equation*}

In \cite[Lemma 1.5]{k1}, it was shown that if $ \va $ is a Keller's $C_c^1 $-mapping, then $ \va'(x) \in \pc \va(x)$. We can easily prove that indeed $ \pc \va(x) = \set{\va'(x)} $.  Fix $ \bl{x} \in \fs{F} $. By the mean value theorem
\cite[Theorem 1]{kh}, for $ y $ close enough to $ x $ and $ t>0 $ close to $ 0 $, we obtain
$$
\dfrac{\va(y+t\bl{x}) - \va(y)}{t} = \langle \va'(z), \bl{x}\rangle
$$
for some $ z \in (y,y+t\bl{x}) $. As $ y \to x $ and $ t \downarrow 0 $, we have $ z \to x $.
Therefore, 
\begin{equation*}
\va^{\circ}(x,\bl{x}) \leq \langle \va'(x), \bl{x} \rangle
\end{equation*}
since $ \va' : \fs{F} \to 
\fs{F}' $ is continuous. If $ h \in \pc \va(x) $, then by Lemma \ref{lm:clsubdi}\eqref{itm:cds.1}, we have $$\va^{\circ} (h,\bl{x}) \leq \langle \va'(x),h\rangle .$$ As $ \bl{x} $ is arbitrary, we conclude  that $ \pc \va(x) = \set{\va'(x)} $.

\section{Global Implicit Function Theorems}
Our proof of a global implicit function theorem is constructive.  In essence,  to solve an equation 
\begin{equation}\label{eq:2}
\va (p,g)= \bl{0}, \quad 
\end{equation}
for a given $ g $, we associate a functional $ \eu{J}_g$ with $ \va $ in such a way that the solution of \eqref{eq:2} corresponds to a critical point of $ \eu{J}_g $. This is the most technically challenging aspect of our approach, involving the methods for locating critical points of functionals: minimization and the mountain pass theorems.

As mentioned in the introduction, we consider two classes of functionals: locally Lipschitz functionals and Keller's $ C_c^1 $-functionals. First, we focus on the locally Lipschitz case.

Our prime ingredient is a compactness condition of Palais-Smale type, for which  we need  the function $ \lambda $ defined in \eqref{eq:chpsc}.
\begin{definition}[\cite{k1}, Definition 2.1, Chang PS-Condition] Consider $ \varphi \in \Lip(\fs{F}, \rr) $.
	We say that $ \varphi $ satisfies
	the Palais-Smale condition in Chang's sense, or the Chang {PS}-condition, if every sequence $ (x_i) \subset \fs{F} $ such that  $ \varphi(x_i) $ is weakly$ ^{*} $ bounded and 
	\begin{equation}\label{eq:cpsc}
	\lim_{i \rightarrow \infty} \lambda_{\varphi,B}(x_i) = 0 \quad \text{for each finite subset}\, B \subset \fs{F}',
	\end{equation}
	possesses a convergent subsequence. Additionally, if  any sequence $ (x_i) \subset \fs{F} $ such that $ \varphi(x_i) \rightarrow c \in \rr$ and satisfies~\eqref{eq:cpsc} has a convergent subsequence, we say that $ \varphi $ satisfies the Chang {PS}-condition at level $ c $. 
\end{definition}
\begin{theorem}[\cite{k1}, Theorem 3.2, Mountain Pass Theorem]\label{th:mptll}
	Consider $ \varphi \in \Lip(\fs{F}, \rr) $ 
	and an open neighborhood $ \mathcal{U} $ of $ x \in \fs{F} $. Let $ y \notin \overline{\mathcal{U}} $ be  such that, for a real number $ m $,
	\begin{equation}\label{eq:ine}
	\max \{ \varphi(x), \varphi(y)\} < m \leq \inf_{\partial \mathcal{U}} \varphi.
	\end{equation}
	Suppose that $ \varphi $ satisfies \eqref{eq:ine} for a real number $ m $ and satisfies the Chang $\mathrm{ PS}$-condition at every level. Define 
	\begin{equation*}
	\Gamma \coloneqq \Big\{ \gamma \in \mathsf{C} ([0,1]; \fs{F}) \mid  \gamma (0)=  x,\gamma(1)=y \Big\},
	\end{equation*}	
	which is the  space of continuous paths joining $ x $ and $ y $. Let
	\begin{equation}\label{eq:mptllc}
	c \coloneqq \inf_{\gamma \in \Gamma} \max_{t \in [0,1]} \varphi (\gamma (t)) \geq m. 
	\end{equation} 
	Then, there exists a sequence $ (x_i) \subset \fs{F} $	such that $ \varphi(x_i) \rightarrow c $ and \eqref{eq:cpsc} holds.
	Moreover, as  $ \varphi $ satisfies the Chang $\mathrm{PS}$-condition at level $c$, we conclude that  $ c $ is a critical value of $ \varphi $.
\end{theorem}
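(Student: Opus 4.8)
The plan is to realise $c$ as a critical value by applying Ekeland's variational principle on the space of paths $\Gamma$ and then extracting from the resulting almost-minimising paths a Palais--Smale sequence for $\varphi$; this is the non-smooth (Chang--Clarke) analogue of Rabinowitz's minimax argument, and it must be run one finite set $B\subset\fs{F}'$ at a time since the topology of $\fs{F}$ is not given by a single norm. First I would equip $\Gamma$ with the uniform metric $\bar{\mt}(\gamma_1,\gamma_2)=\max_{t\in[0,1]}\mt_\fs{F}(\gamma_1(t),\gamma_2(t))$; because $\fs{F}$ is complete and the endpoint conditions $\gamma(0)=x,\ \gamma(1)=y$ are closed, $(\Gamma,\bar{\mt})$ is a complete metric space. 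The functional $\psi(\gamma)=\max_t\varphi(\gamma(t))$ is continuous, since $\varphi$ is Lipschitz on a neighbourhood of any compact curve, so uniform convergence of curves passes through $\varphi$ and through the maximum; and $\psi$ is bounded below by $m$, because any $\gamma\in\Gamma$ joins $x\in\mathcal{U}$ to $y\notin\overline{\mathcal{U}}$ and hence, by connectedness of $[0,1]$, meets $\partial\mathcal{U}$, so $\psi(\gamma)\geq\inf_{\partial\mathcal{U}}\varphi\geq m$. In particular $c\geq m>-\infty$ and $c>\max\{\varphi(x),\varphi(y)\}$. For $\varepsilon_n\downarrow 0$, Ekeland's principle then yields $\gamma_n\in\Gamma$ with $\psi(\gamma_n)\leq c+\varepsilon_n^{2}$ and $\psi(\gamma)>\psi(\gamma_n)-\varepsilon_n\bar{\mt}(\gamma,\gamma_n)$ for all $\gamma\neq\gamma_n$; write $b_n=\psi(\gamma_n)$ and $M_n=\{t\in[0,1]:\varphi(\gamma_n(t))=b_n\}$, a nonempty compact subset of $(0,1)$.

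The heart of the proof, and the step I expect to be the main obstacle, is to pass from the almost-minimising path $\gamma_n$ to an \emph{almost-critical point} of $\varphi$: namely, to show that for every finite $B\subset\fs{F}'$ there is $t_n\in M_n$ with $\lambda_{\varphi,B}(\gamma_n(t_n))\to 0$. I would argue by contradiction: if $\lambda_{\varphi,B}(\gamma_n(t))\geq\delta>0$ for all $t\in M_n$ along a subsequence, then at each such $t$ the weak$^*$-compact convex set $\pc\varphi(\gamma_n(t))$ is strictly separated, by Hahn--Banach, from the weak$^*$-neighbourhood $\{y:\snorm[B]{y}\leq\delta\}$ of the origin in $\fs{F}'$; since $\varphi^{\circ}(u,\cdot)$ is the support function of $\pc\varphi(u)$, this produces a direction $v_t\in\fs{F}$ along which $\varphi$ decreases at a rate dominating $\varepsilon_n$ times the $\mt_\fs{F}$-displacement, and by lower semicontinuity of $\lambda_{\varphi,B}$ (which follows from weak$^*$-upper semicontinuity of $u\mapsto\pc\varphi(u)$) the descent persists on a neighbourhood. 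Gluing the $v_t$ by a locally Lipschitz partition of unity --- available since $\fs{F}$ is metrizable, hence paracompact, and distance functions are $1$-Lipschitz --- into a locally Lipschitz field $V$ supported near $\gamma_n(M_n)$ and vanishing where $\varphi(\gamma_n(\cdot))\leq b_n-\eta$, one deforms $\gamma_n$ to $\gamma(t)=\gamma_n(t)+\sigma(t)V(\gamma_n(t))$ with a suitable bump $\sigma$ and, estimating $\varphi$ along the new curve via the mean value theorem \cite{kh}, obtains $\psi(\gamma)<\psi(\gamma_n)-\varepsilon_n\bar{\mt}(\gamma,\gamma_n)$, contradicting the Ekeland inequality. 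The delicate points are Fr\'echet-specific: with no norm to normalise against, the separation and the pseudo-gradient field have to be made quantitative relative to the single seminorm datum $B$, and the estimates kept uniform along the compact curve $\gamma_n(M_n)$.

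Given this, I would choose a cofinal sequence $(B_k)$ of finite subsets of $\fs{F}'$ and use the monotonicity $B\subseteq B'\Rightarrow\lambda_{\varphi,B}\leq\lambda_{\varphi,B'}$ together with a diagonal extraction to produce a single sequence $x_i=\gamma_{n_i}(t_i)$ with $\varphi(x_i)\to c$ and $\lambda_{\varphi,B}(x_i)\to 0$ for every finite $B$ --- that is, a sequence satisfying \eqref{eq:cpsc} at level $c$, which is the first assertion. For the last assertion, since $\varphi$ satisfies the Chang PS-condition at level $c$, a subsequence of $(x_i)$ converges to some $\bar{x}$; continuity of $\varphi$ gives $\varphi(\bar{x})=c$, and lower semicontinuity of each $\lambda_{\varphi,B}$ forces $\lambda_{\varphi,B}(\bar{x})=0$ for all finite $B$, so $\zero{\fs{F}'}\in\pc\varphi(\bar{x})$ by the definitions of $\lambda$ and of $\pc\varphi$. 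Hence $\bar{x}$ is a critical point of $\varphi$ and $c$ is a critical value.
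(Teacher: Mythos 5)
First, a point of order: the paper does not prove this theorem at all --- it is imported verbatim from \cite{k1} (Theorem 3.2) --- so your proposal has to stand on its own. Its skeleton is the standard non-smooth minimax argument and is the right one: Ekeland's principle on the complete metric space $(\Gamma,\bar{\mt})$, the lower bound $\psi(\gamma)\ge m$ because every path meets $\partial\mathcal{U}$, a separation/pseudo-gradient deformation to contradict the Ekeland inequality, and finally the Chang PS-condition plus lower semicontinuity of $\lambda_{\varphi,B}$ and weak$^*$-compactness of $\pc\varphi(\bar x)$ to conclude $\zero{\fs{F}'}\in\pc\varphi(\bar x)$. Those last steps are fine.

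The genuine gap is in how you handle the family of seminorms. You run the almost-criticality argument ``one finite set $B$ at a time,'' so the parameter $t_n$, and hence the point $x_n=\gamma_n(t_n)$, that your contradiction argument produces depends on $B$; you then propose to merge these into a single sequence via ``a cofinal sequence $(B_k)$ of finite subsets'' and a diagonal extraction. No countable cofinal family of finite subsets exists in an infinite-dimensional space: the union $\bigcup_k B_k$ is countable, and any finite set containing a vector outside it is majorized by no $B_k$. Monotonicity of $\lambda_{\varphi,B}$ in $B$ therefore cannot reduce \eqref{eq:cpsc} to a countable family, and the diagonalization cannot deliver one sequence $(x_i)$ working for \emph{every} finite $B$, which is exactly what the theorem asserts. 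The repair is to decouple the sequence from $B$: run the deformation against the metric $\mt_{\fs{F}}$ itself --- the quantity Ekeland's inequality actually penalizes --- so that for each $n$ you obtain a single $x_n\in\gamma_n(M_n)$ satisfying an estimate of the form $\varphi^{\circ}(x_n,v)\ge-\varepsilon_n\,\mt_{\fs{F}}(v,\zero{\fs{F}})$ for all admissible $v$; a Hahn--Banach sandwich (or a minimax argument on the weak$^*$-compact convex set $\pc\varphi(x_n)$) then yields $\xi_n\in\pc\varphi(x_n)$ with $\abs{\langle\xi_n,v\rangle}\le\varepsilon_n\,\mt_{\fs{F}}(v,\zero{\fs{F}})$, whence $\snorm[B]{\xi_n}\le\varepsilon_n C_B\to0$ for every finite $B$ simultaneously. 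Relatedly, when you compare the decrease of $\varphi$ along the deformed curve with the penalty $\varepsilon_n\bar{\mt}(\gamma,\gamma_n)$, note that $\mt_{\fs{F}}(sv,\zero{\fs{F}})/s$ need not remain bounded as $s\downarrow0$, so the pseudo-gradient directions must be normalized against the metric (kept in a fixed bounded set), not merely against the single datum $B$ as you suggest.
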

\begin{lemma}[\cite{k1}, Lemma 4.1] \label{lem:mill}
	Consider $ \varphi \in \Lip (\fs{F}, \rr)$, which is bounded from bellow. Then, there exists a sequence $ (x_i) \subset \fs{F}$ such that 
	$ \lim_{i \rightarrow \infty} \varphi(x_i) =\inf_{\fs{F}} \varphi $, and 
	\begin{equation}
	\lim_{i \rightarrow \infty} \lambda_{\varphi,B}(x_i) = 0 \quad \text{for each finite subset}\, B \subset \fs{F}'.
	\end{equation}
\end{lemma}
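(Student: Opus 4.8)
The plan is to produce the sequence by Ekeland's variational principle and then to translate the metric estimate it provides into the subdifferential estimate \eqref{eq:cpsc}. Since $(\fs{F},\mt_{\fs{F}})$ is a complete metric space and $\va$, being locally Lipschitz, is continuous and bounded below, Ekeland's principle applies and, for each $i\in\nn$, furnishes a point $x_i\in\fs{F}$ with
$$
\va(x_i)\le\inf_{\fs{F}}\va+\tfrac{1}{i},\qquad \va(z)\ge\va(x_i)-\tfrac{1}{i}\,\mt_{\fs{F}}(z,x_i)\quad(z\in\fs{F}).
$$
The first inequality already yields $\va(x_i)\to\inf_{\fs{F}}\va$; the second says exactly that $x_i$ is a global minimum of the locally Lipschitz functional $\psi_i:=\va+\tfrac{1}{i}\,\mt_{\fs{F}}(\cdot,x_i)$.

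The next step is to extract subdifferential information at $x_i$. Since $x_i$ is a (global, hence local) minimum of $\psi_i$, Lemma~\ref{lm:clsubdi}\eqref{itm:cds.2} gives $\zero{\fs{F}'}\in\pc\psi_i(x_i)$. Invoking the Clarke sum rule together with positive homogeneity of the subdifferential under positive scalars (both part of the calculus developed in \cite{k1}), one obtains $\pc\psi_i(x_i)\subseteq\pc\va(x_i)+\tfrac{1}{i}\,\pc\big(\mt_{\fs{F}}(\cdot,x_i)\big)(x_i)$, so there exist $y_i\in\pc\va(x_i)$ and $w_i\in\pc\big(\mt_{\fs{F}}(\cdot,x_i)\big)(x_i)$ with $y_i=-\tfrac{1}{i}w_i$.

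Finally, fix a finite set $B$. By translation invariance the set $\mathcal{D}:=\pc\big(\mt_{\fs{F}}(\cdot,x_i)\big)(x_i)$ does not depend on $i$ (it equals $\pc(\mt_{\fs{F}}(\cdot,\zero{\fs{F}}))(\zero{\fs{F}})$), and because it is weak$^*$-compact the number $C_B:=\sup_{w\in\mathcal{D}}\snorm[B]{w}$ is finite; hence
$$
\lambda_{\va,B}(x_i)=\min_{y\in\pc\va(x_i)}\snorm[B]{y}\le\snorm[B]{y_i}=\tfrac{1}{i}\,\snorm[B]{w_i}\le\frac{C_B}{i}\longrightarrow 0,
$$
which is the assertion. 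The real obstacle is precisely the interface between the single metric $\mt_{\fs{F}}$ — indispensable for completeness, hence for Ekeland's principle — and the seminorm-defined quantity $\lambda_{\va,B}$: one must verify that $\mt_{\fs{F}}(\cdot,x_i)$ belongs to the class of mappings for which the subdifferential calculus of \cite{k1} is valid, and that its subdifferential at the centre, which (unlike the Banach case, where it would be the closed dual unit ball) may behave irregularly along directions where the one-sided difference quotients of $\mt_{\fs{F}}$ blow up, is nonetheless bounded in every $\snorm[B]{\cdot}$; that boundedness is exactly what weak$^*$-compactness secures, and it is the step I would be most careful about.
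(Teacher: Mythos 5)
The paper never proves this lemma; it is imported verbatim from \cite{k1}, so the only comparison available is with the standard argument, which is exactly the route you chose: Ekeland's variational principle on the complete metric space $(\fs{F},\mt_{\fs{F}})$, followed by Clarke calculus at the almost-minimizers. The first half of your argument is fine: $\va$ is continuous and bounded below, the metric \eqref{metric} is complete, so Ekeland produces $x_i$ with $\va(x_i)\to\inf_{\fs{F}}\va$ and with $x_i$ a global minimizer of $\psi_i=\va+\tfrac1i\,\mt_{\fs{F}}(\cdot,x_i)$.

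The second half has a genuine gap, and it sits exactly where you said you would ``be most careful'', so you should not have let yourself off with the assertion that weak$^*$-compactness ``secures'' the bound. Every tool you invoke afterwards --- Lemma~\ref{lm:clsubdi}\eqref{itm:cds.2} applied to $\psi_i$, the sum rule, and above all the weak$^*$-compactness of $\pc\bigl(\mt_{\fs{F}}(\cdot,x_i)\bigr)(x_i)$ --- is available only for functions in the class $\Lip(\fs{F},\rr)$, i.e.\ functions locally Lipschitz with respect to the continuous \emph{seminorms}; it is for those that $\va^{\circ}(x,\cdot)$ is finite-valued and $\pc\va(x)$ is equicontinuous, hence weak$^*$-compact by Alaoglu--Bourbaki. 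The perturbation $g(\cdot)=\mt_{\fs{F}}(\cdot,x_i)$ is $1$-Lipschitz only for the \emph{metric}. Taking $h=x_i$ in \eqref{def:gg} one finds
\begin{equation*}
g^{\circ}(x_i,y)\;\ge\;\limsup_{t\downarrow 0}\,\frac{\mt_{\fs{F}}(x_i+ty,x_i)}{t}\;=\;\sum_{n=1}^{\infty}\frac{1}{2^{n}}\,\snorm[\fs{F},n]{y},
\end{equation*}
and in a general \fr space this series diverges for many $y$, since the seminorms of a fixed vector may grow arbitrarily fast in $n$. Hence $g\notin\Lip(\fs{F},\rr)$ in the sense required by the calculus of \cite{k1}; the defining inequalities of $\mathcal{D}=\pc g(x_i)$ are vacuous in every direction $y$ where the series diverges, so $\mathcal{D}$ need not be equicontinuous or weak$^*$-compact, and your constant $C_B=\sup_{w\in\mathcal{D}}\snorm[B]{w}$ is only guaranteed finite when every element of $B$ has $\sum_n 2^{-n}\snorm[\fs{F},n]{\cdot}<\infty$ --- a dense subspace, but not all of the space, whereas the lemma asserts the conclusion for \emph{every} finite $B$. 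To close the gap you must either run Ekeland's principle with a seminorm-adapted perturbation (for instance seminorm by seminorm, with a diagonal extraction), or prove directly that the almost-minimality inequality $\va(z)\ge\va(x_i)-\tfrac1i\mt_{\fs{F}}(z,x_i)$ forces $\lambda_{\va,B}(x_i)\to 0$ for arbitrary finite $B$, e.g.\ via the minimax identity expressing $\lambda_{\va,B}(x_i)$ through $\va^{\circ}(x_i,\cdot)$ on the absolutely convex hull of $B$; as written, the estimate does not follow.
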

For $ \varphi \in \Lip(\fs{F}, \rr) $, we define  $ x \in \fs{F} $ as a regular point of $ \va $ if the directional derivative $ \va'(x)h $ exists for all $ h \in \fs{F} $ and $ \va'(x)h = \va^{\circ}(x,h) $.
\begin{theorem}[\cite{k1}, Lemma 1.4, Chain Rule]\label{th:crll}
	Let  $ \psi : \fs{E}\rightarrow \fs{F}  $ be a Keller's $ C_c^1 $-mapping in an open neighborhood of $ e \in \fs{E} $, and $ \va : \fs{F} \rightarrow \rr $ a locally Lipschitz mapping.
	Then $ \Phi = \va \circ \psi  $ is locally Lipschitz, and 
	\begin{equation}\label{th:crll1}
	\pc \Phi (e) \subseteq  \pc \va (\psi (e)) \circ \psi' (e).
	\end{equation}
	Moreover, if $\va$ (or its negative $-\va$) is regular at $\psi(x)$, then $\Phi$ (or its negative $-\Phi$) is regular at $x$ and equality in \eqref{th:crll1} holds. Also, if $\psi$ maps every neighborhood of $x$ onto a set that is dense in the neighborhood $\va(x)$, then equality in \eqref{th:crll1} holds.	
\end{theorem}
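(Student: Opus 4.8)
The plan is to adapt Clarke's chain rule for the Clarke subdifferential, replacing the loss‑free differentiability of a smooth map by Keller's $C_c^1$-property of $\psi$ and scalar growth estimates by estimates in a single continuous seminorm. First I would check that $\Phi=\va\circ\psi$ is locally Lipschitz near $e$: since $\psi$ is $C_c^1$ it is continuous, and a standard application of the mean value theorem (\cite{kh}) shows that it is locally Lipschitz with respect to suitable continuous seminorms; composing with the locally Lipschitz $\va$ then yields $\Phi\in\Lip(\fs{E},\rr)$, so that $\pc\Phi(e)$ is a well‑defined, weak$^{*}$-compact convex subset of $\fs{E}'$, and $\pc\va(\psi(e))$ likewise in $\fs{F}'$.

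The technical core is the inequality
\[
\Phi^{\circ}(e,h)\ \le\ \va^{\circ}\!\bigl(\psi(e),\psi'(e)h\bigr),\qquad h\in\fs{E}.
\]
To prove it I would fix $h$ and let $u\to e$, $t\downarrow 0$. By the mean value theorem there is $z=z(u,t)$ on the segment $(u,u+th)$ with $\psi(u+th)-\psi(u)=t\,\psi'(z)h$, and $z\to e$. Put $r(u,t):=t\bigl(\psi'(z)-\psi'(e)\bigr)h$; continuity of $\psi'$ into $\lc(\fs{E},\fs{F})$ (whose topology controls $\snorm[\fs{F},n]{L(h)}$) forces $\snorm[\fs{F},n]{r(u,t)}/t\to 0$, while local Lipschitzness of $\va$ near $\psi(e)$ in the seminorm $\snorm[\fs{F},n]{\cdot}$ gives
\[
\frac{\Phi(u+th)-\Phi(u)}{t}=\frac{\va\bigl(\psi(u)+t\,\psi'(e)h\bigr)-\va(\psi(u))}{t}+O\bigl(\snorm[\fs{F},n]{r(u,t)}/t\bigr).
\]
Passing to $\limsup$ over $u\to e$, $t\downarrow 0$, the error term drops out, and since $\psi(u)\to\psi(e)$ the remaining limsup is bounded by $\va^{\circ}(\psi(e),\psi'(e)h)$. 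I expect this mean‑value estimate, together with the bookkeeping of seminorm indices it requires (choosing one index that simultaneously controls the Lipschitz constant of $\va$ and the convergence $\psi'(z)h\to\psi'(e)h$), to be the main obstacle.

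Granting the inequality, the inclusion $\pc\Phi(e)\subseteq\pc\va(\psi(e))\circ\psi'(e)$ is soft. Put $T:=\psi'(e)\in\lc(\fs{E},\fs{F})$; its adjoint $T^{*}\colon\fs{F}'\to\fs{E}'$, $T^{*}y'=y'\circ T$, is linear and continuous for the weak$^{*}$-topologies, so $T^{*}\bigl(\pc\va(\psi(e))\bigr)=\pc\va(\psi(e))\circ\psi'(e)$ is weak$^{*}$-compact and convex, with support function $h\mapsto\sup\{\langle y',Th\rangle: y'\in\pc\va(\psi(e))\}=\va^{\circ}(\psi(e),Th)$, because $\va^{\circ}(\psi(e),\cdot)$ is sublinear, continuous, and hence the support function of $\pc\va(\psi(e))$. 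If some $x'\in\pc\Phi(e)$ failed to lie in this set, the separation theorem in $\bigl(\fs{E}',\sigma^{*}(\fs{E}',\fs{E})\bigr)$ would produce $h\in\fs{E}$ with $\langle x',h\rangle>\va^{\circ}(\psi(e),Th)$, contradicting $\langle x',h\rangle\le\Phi^{\circ}(e,h)\le\va^{\circ}(\psi(e),Th)$; hence $x'\in\pc\va(\psi(e))\circ\psi'(e)$.

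For equality when $\va$ is regular at $\psi(e)$, I would run the computation above at the single point $u=e$: it shows that the one‑sided derivative $\Phi'(e)h=\lim_{t\downarrow0}\bigl(\Phi(e+th)-\Phi(e)\bigr)/t$ exists and equals $\lim_{t\downarrow0}\bigl(\va(\psi(e)+t\psi'(e)h)-\va(\psi(e))\bigr)/t=\va^{\circ}(\psi(e),\psi'(e)h)$, the last step using regularity of $\va$. Since $\Phi'(e)h\le\Phi^{\circ}(e,h)\le\va^{\circ}(\psi(e),\psi'(e)h)=\Phi'(e)h$, the three quantities coincide, so $\Phi$ is regular at $e$ and $\Phi^{\circ}(e,\cdot)=\va^{\circ}(\psi(e),\psi'(e)\cdot)$ on all of $\fs{E}$; two weak$^{*}$-compact convex subsets of $\fs{E}'$ with the same support function are equal, giving $\pc\Phi(e)=\pc\va(\psi(e))\circ\psi'(e)$. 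The case of $-\va$ follows by applying this to $-\va$, using $(-\va)^{\circ}(x,h)=\va^{\circ}(x,-h)$, whence $\pc(-\va)(x)=-\pc\va(x)$ and $\pc(-\Phi)(e)=-\pc\Phi(e)$. Finally, for the density clause (which I read as: $\psi$ maps every neighborhood $U$ of $e$ onto a set dense in a neighborhood of $\psi(e)$), note that for fixed $t$ the map $w\mapsto\bigl(\va(w+t\psi'(e)h)-\va(w)\bigr)/t$ is continuous, so the supremum of the difference quotients $\bigl(\va(\psi(u)+t\psi'(e)h)-\va(\psi(u))\bigr)/t$ over $u\in U$ dominates the supremum of $\bigl(\va(w+t\psi'(e)h)-\va(w)\bigr)/t$ over a whole neighborhood of $\psi(e)$; letting $U$ shrink yields the reverse inequality $\va^{\circ}(\psi(e),\psi'(e)h)\le\Phi^{\circ}(e,h)$, and equality of the subdifferentials follows exactly as in the regular case.
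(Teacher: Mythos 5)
The paper itself offers no proof of this statement: it is quoted verbatim from \cite{k1} (Lemma 1.4), so there is nothing in-paper to compare your argument against. What you have written is the classical Clarke chain rule transplanted to the Keller $C_c^1$/Fr\'echet setting, and its architecture is the right one: the key inequality $\Phi^{\circ}(e,h)\le\va^{\circ}(\psi(e),\psi'(e)h)$, the identification of $\va^{\circ}(\psi(e),\cdot)$ as the support function of the weak$^*$-compact convex set $\pc\va(\psi(e))$, weak$^*$ separation to get the inclusion, and equality of support functions in the regular and density cases. Those parts are sound.

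Two steps need more care than you give them. First, you use the mean value theorem in the equality form $\psi(u+th)-\psi(u)=t\,\psi'(z)h$ for the $\fs{F}$-valued map $\psi$; this form fails for vector-valued maps already in $\rr^2$. You should instead use the convex-hull (or integral) form, $\psi(u+th)-\psi(u)\in t\,\overline{\mathrm{conv}}\{\psi'(u+sth)h : s\in[0,1]\}$, which still gives $\snorm[\fs{F},n]{r(u,t)}/t\le\sup_{s}\snorm[\fs{F},n]{(\psi'(u+sth)-\psi'(e))h}\to 0$ by continuity of $\psi'$ into $\lc(\fs{E},\fs{F})$ evaluated on the compact set $\{h\}$; so the estimate survives, but the citation as stated is wrong. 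Second, and more substantively, local Lipschitz continuity of $\Phi$ is not ``a standard application of the mean value theorem'' in Fr\'echet spaces: to bound $\snorm[\fs{F},n]{\psi(x)-\psi(y)}$ by $C\snorm[\fs{E},m]{x-y}$ uniformly on a neighborhood of $e$ you need the family $\{\psi'(z)\}_{z\text{ near }e}$ to be equicontinuous, and continuity of $z\mapsto\psi'(z)$ for the topology of uniform convergence on compacta does \emph{not} yield this, since bounded sets are not relatively compact. This is precisely the step that does not transplant from the Banach case, and as written your proof has a gap there (note that finiteness and sublinearity of $\Phi^{\circ}(e,\cdot)$, which is what the separation argument actually consumes, can be recovered from your key inequality, but the Lipschitz assertion itself is part of the theorem's conclusion). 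The regularity and density arguments are correct modulo the usual bookkeeping of iterated versus joint upper limits.
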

The following theorem is inspired by the Banach case, \cite[Theorem 8]{ga1}. However, the proof and the assumptions are different. In the Banach case, norms are essential in both the hypothesis and the proof.
\begin{theorem}[Global Implicit Function Theorem I]\label{th:gift1}
	Let  $ \fs{G}$ be a Fr\'{e}chet space, and let $\varphi  : \fs{E} \times \fs{F} \rightarrow \fs{G} $ be a Keller's $ C_c^1 $-mapping. Assume $ \eu{I} : \fs{G}
	\rightarrow [0, \infty ]$ is a  locally Lipschitz function with the following two properties:
	\begin{enumerate}[label={\bf (GP\arabic*)},ref=GP\arabic*]
		\itemindent=6pt
		\item \label{itm:gdp1} $\eu{I}(x)=0$ if and only if $x=\zero{\fs{G}}$,
		\item \label{itm:gdp2} $\zero{\fs{G}'} \in \pc \eu{I}(y)$ if and only if $y =\zero{\fs{G}}$.
	\end{enumerate}
	Suppose that the following two conditions hold.
	\begin{enumerate}[label={\bf (GIF\arabic*)},ref=GIF\arabic*]
		\itemindent=10pt
		\item \label{itm:gift.1} For any $ f \in \fs{F} $, the function $\eu{J}_f: \fs{E} \to [0,\infty]$  defined by $ \eu{J}_f(e) = \eu{I} (\varphi (e,f) ) $ satisfies the Chang Palais-Smale condition  at all levels,
		\item \label{itm:gift.2} the partial derivative  in the first variable $ \dd_1 \varphi: \fs{E} \rightarrow \fs{G} $ is bijective. 
	\end{enumerate}
	Then, there exists a unique Keller's $ C_c^1 $-mapping $ \eu{K}: \fs{F}\rightarrow \fs{E} $ such that, for any $ g \in \fs{F} $, we have $ \varphi (\eu{K} (g),g)= \zero{\fs{G}}$. Moreover, the derivative $  \eu{K}'(g) $ is given by the formula
	\begin{equation}\label{eq:der1}
	\eu{K}' (g) = -\left[\dd_1 \varphi (\eu{K}(g),g) \right]^{-1} \circ \dd_2 \varphi (\eu{K}(g),g).
	\end{equation}
\end{theorem}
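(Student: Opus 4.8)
The plan is to construct $\eu{K}$ pointwise: for each fixed $g \in \fs{F}$, solve $\varphi(e,g) = \zero{\fs{G}}$ for $e$ by minimizing the associated functional $\eu{J}_g(e) = \eu{I}(\varphi(e,g))$, and then show that the resulting selection is unique and Keller's $C_c^1$. First I would observe that since $\eu{I}$ takes values in $[0,\infty]$ and $\eu{J}_g \geq 0$, the infimum $c_g := \inf_{\fs{E}} \eu{J}_g$ exists in $[0,\infty)$. By Lemma \ref{lem:mill} there is a sequence $(e_i)$ with $\eu{J}_g(e_i) \to c_g$ and $\lambda_{\eu{J}_g,B}(e_i) \to 0$ for every finite $B \subset \fs{E}'$; by the Chang PS-condition at level $c_g$ (hypothesis \eqref{itm:gift.1}), a subsequence converges to some $e_g \in \fs{E}$ with $\eu{J}_g(e_g) = c_g$, so $e_g$ is a global minimizer. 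To see $c_g = 0$: if $c_g > 0$, then $\varphi(e_g,g) \neq \zero{\fs{G}}$, and since $e_g$ is a local minimizer of $\eu{J}_g = \eu{I} \circ \varphi(\cdot,g)$, Lemma \ref{lm:clsubdi}\eqref{itm:cds.2} gives $\zero{\fs{E}'} \in \pc \eu{J}_g(e_g)$. By the Chain Rule (Theorem \ref{th:crll}) and \eqref{itm:gift.2}, $\pc \eu{J}_g(e_g) \subseteq \pc \eu{I}(\varphi(e_g,g)) \circ \dd_1\varphi(e_g,g)$; since $\dd_1\varphi(e_g,g)$ is a bijection (hence, as a continuous linear bijection between Fréchet spaces, a topological isomorphism by the open mapping theorem), $\zero{\fs{E}'} \in \pc\eu{J}_g(e_g)$ forces $\zero{\fs{G}'} \in \pc\eu{I}(\varphi(e_g,g))$, which by \eqref{itm:gdp2} means $\varphi(e_g,g) = \zero{\fs{G}}$, contradicting $c_g > 0$. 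So $c_g = 0$ and, by \eqref{itm:gdp1}, $\varphi(e_g,g) = \zero{\fs{G}}$. Set $\eu{K}(g) := e_g$.

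\textbf{Uniqueness.} Suppose $\varphi(e,g) = \varphi(e',g) = \zero{\fs{G}}$ with $e \neq e'$. Then both $e,e'$ are global minimizers of $\eu{J}_g$ with value $0$. To rule out a second minimizer I would invoke the Mountain Pass Theorem \ref{th:mptll}: if $e'$ lies outside some small neighborhood $\uu$ of $e$ on whose boundary $\eu{J}_g$ is bounded below by some $m > 0$, then there is a mountain-pass critical value $c \geq m > 0$, giving a critical point $z$ with $\zero{\fs{E}'} \in \pc\eu{J}_g(z)$ and $\eu{J}_g(z) = c > 0$; running the chain-rule argument above at $z$ yields $\varphi(z,g) = \zero{\fs{G}}$, i.e. $\eu{J}_g(z) = 0$, a contradiction. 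The delicate point is producing the required separating neighborhood $\uu$ with $\inf_{\partial\uu}\eu{J}_g > 0$: this needs a local lower bound on $\eu{J}_g$ near $e$ away from $e$ itself, which should follow from the local inverse function theorem in Fréchet spaces applied using \eqref{itm:gift.2} (so $\varphi(\cdot,g)$ is a local diffeomorphism near $e$, hence locally injective and open), combined with property \eqref{itm:gdp1} of $\eu{I}$; if a clean local inverse theorem is unavailable in this generality, one argues directly that $e$ is an isolated zero and that $\eu{J}_g$ stays positive on a sphere around it.

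\textbf{Regularity of $\eu{K}$.} Once $\eu{K}$ is well-defined, I would show it is continuous and then Keller's $C_c^1$ by a local-inverse argument. Near a fixed $(e_0,g_0) = (\eu{K}(g_0),g_0)$ the map $\dd_1\varphi(e_0,g_0)$ is a topological isomorphism, so the classical (Nash–Moser-free, since no loss of derivative is claimed locally) implicit function theorem for Keller's $C_c^1$-maps between Fréchet spaces — available because $\dd_1\varphi$ is invertible with continuous inverse — yields a local Keller's $C_c^1$ solution $g \mapsto \tilde{\eu{K}}(g)$ with $\varphi(\tilde{\eu{K}}(g),g) = \zero{\fs{G}}$ and $\tilde{\eu{K}}(g_0) = e_0$; by the uniqueness just established, $\tilde{\eu{K}} = \eu{K}$ near $g_0$, so $\eu{K}$ inherits the Keller's $C_c^1$ property and the derivative formula \eqref{eq:der1} by implicit differentiation of $\varphi(\eu{K}(g),g) = \zero{\fs{G}}$: applying $\dd_1\varphi \cdot \eu{K}'(g) + \dd_2\varphi = 0$ and inverting $\dd_1\varphi(\eu{K}(g),g)$.

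\textbf{Main obstacle.} I expect the hardest part to be the uniqueness step, specifically verifying the mountain-pass geometry \eqref{eq:ine} — i.e. exhibiting a neighborhood $\uu$ of one zero with $\inf_{\partial\uu}\eu{J}_g$ strictly positive and a second zero outside $\Cl{\uu}$. In the Banach case this is handled with norm estimates and a local inversion bound; in the Fréchet setting, where seminorm families preclude such growth estimates, the argument must instead lean entirely on the topological-isomorphism property of $\dd_1\varphi$ (local diffeomorphism, hence isolated zeros) together with \eqref{itm:gdp1}. A secondary technical point is confirming that a Keller's $C_c^1$ local implicit function theorem with no loss of derivative is genuinely at our disposal under only the bijectivity hypothesis \eqref{itm:gift.2}; this is where the open mapping theorem for Fréchet spaces is essential, ensuring $[\dd_1\varphi]^{-1}$ is continuous so that formula \eqref{eq:der1} is meaningful.
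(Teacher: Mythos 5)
Your existence and uniqueness arguments follow essentially the same route as the paper: minimize $\eu{J}_g$ via Lemma \ref{lem:mill} and the Chang PS-condition to obtain a critical point $p$, pass through the chain rule (Theorem \ref{th:crll}) and the bijectivity of $\dd_1\varphi$ to conclude $\zero{\fs{G}'}\in\pc\eu{I}(\varphi(p,g))$, hence $\varphi(p,g)=\zero{\fs{G}}$ by \eqref{itm:gdp2}; a putative second zero is then excluded by Theorem \ref{th:mptll}. Your worry about producing a neighborhood $\uu$ with $\inf_{\partial\uu}\eu{J}_g>0$ (as opposed to mere pointwise positivity on $\partial\uu$) is well placed: the paper simply asserts the existence of such an $m>0$, and in an infinite-dimensional space pointwise positivity on a sphere does not by itself yield a positive infimum, so this is a point on which the paper's own proof is no more detailed than yours.

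The genuine gap is in your regularity step. You invoke ``the classical implicit function theorem for Keller's $C_c^1$-maps between Fr\'{e}chet spaces, available because $\dd_1\varphi$ is invertible with continuous inverse.'' No such theorem exists in this generality: beyond the Banach setting, bijectivity of the derivative (even a topological isomorphism at every point, even for smooth maps) does not yield a local inverse or a local implicit function --- this failure is precisely what necessitates Nash--Moser-type hypotheses, which the paper explicitly sets out to avoid. The same objection undermines your fallback for the uniqueness step (a ``local inverse function theorem'' giving isolated zeros and local injectivity of $\varphi(\cdot,g)$). The paper itself sidesteps the regularity question entirely, defining $\eu{K}(g)=p$ and calling the derivative formula \eqref{eq:der1} a straightforward chain-rule computation without proving that $\eu{K}$ is continuous, let alone Keller's $C_c^1$; so your instinct that an argument must be supplied here is correct, but the tool you reach for is not available in Fr\'{e}chet spaces, and any valid argument would have to extract the regularity of $\eu{K}$ from the global uniqueness and the hypotheses on $\eu{I}$ rather than from a local inversion principle.
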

\begin{proof}
	Let $ g \in \fs{F} $ be given, and consider the function $ \eu{J}_g(e) = \eu{I} (\varphi (e,g) ) $.
	By Lemma \ref{lem:mill}, there exists a sequence $ (e_n)\subset \fs{E} $ such that 
	\begin{equation*}
	\lim_{n \rightarrow \infty} \eu{J}_g(e_n) =\inf_{\fs{E}} \eu{J}_g, 
	\end{equation*}
	and for each
	finite subset $ B $ of $ \fs{F}' $, we have
	\begin{equation}
	\lim_{n \rightarrow \infty} \lambda_{\eu{J}_g,B}(e_n) = 0.
	\end{equation}
	Since, by \eqref{itm:gift.1}, $ \eu{J}_g $ satisfies the Chang PS-condition, the sequence $ (e_n) $ has a convergent subsequent, once again denoted by 
	$(e_n)$,
	with the limit $ p $, which is a critical point of $ \eu{J}_g $. Thus,  by Lemma \ref{lm:clsubdi}\eqref{itm:cds.2}, we have $$ \zero{\fs{G}'} \in \pc \eu{J}_g (p). $$
	By the chain rule (Theorem \ref{th:crll}), we have 
	\begin{equation*}
	\pc \eu{J}_g (p) \subset \pc \eu{I} (\varphi (p,g))  \circ \dd_1 \varphi (p,g). 
	\end{equation*}
	Thus, there exists $ \xi \in \pc \eu{I} (\varphi (p,g)) $ such that $ \zero{\fs{G}'} = \xi \circ  \dd_1 \varphi (p,g)$. By \eqref{itm:gift.2}, the derivative $ \dd_1 \varphi $ is invertible at $ p $, implying that $ \xi = \zero{\fs{G}'} $. Therefore,  $ \zero{\fs{G}'} \in \pc \eu{I} (\varphi (p,g)) $, and hence \eqref{itm:gdp2} implies that 
	\begin{equation*}
	\varphi(p,g)= \zero{\fs{G}}. 
	\end{equation*}
	Now, we prove  by contradiction that $ p $ is the only point for which $\varphi(p,g)= \zero{\fs{G}}$. 
	Let $ e_1 \neq p \in \fs{E} $ be such that  $$ \varphi (e_1,g) =\varphi(p,g) = \zero{\fs{G}}. $$ 
	From the definition of the function  $ \eu{J}_g(e) = \eu{I} (\varphi(e, g)) $, it follows that $ \eu{J}_g (e_1) = \eu{J}_g(p) = 0 $.
	
	Let $ \mathbf{r} > 0 $ be  small enough  such that $ p \notin \Cl{e_1+\bl{r}\bo_{\mt_\fs{E}}} $.
	Without loss of generality, we can suppose $ e_1 = \zero{\fs{E}}$. For any $ e \in \partial (\zero{\fs{E}}+\bl{r}\bo_{\mt_\fs{E}}) $,  by  \eqref{itm:gdp1}, we have $\eu{I}(e) \neq 0 $, and therefore
	\begin{equation*}
	0 < m \leq \eu{J}_g (e) \quad \text{for some} \, m.
	\end{equation*}
	Thus, all assumptions of Theorem~\ref{th:mptll} hold; therefore, there exists $ ( e_n ) \subset\fs{ E} $ such that 
	\begin{equation*}
	\lim_{n \rightarrow \infty} \eu{J}_g (e_n) = c,
	\end{equation*}
	for some $ c \geq m  $ characterized by
	\eqref{eq:mptllc}. Since $ \eu{J}_g (e_n) $ satisfies the Chang PS-condition at $ c $, it has a convergent subsequent, denoted again by $ (e_n) $,  with the limit $h$. Therefore, $ h $ is a critical point of $\eu{J}_g$, and therefore $  \zero{\fs{G}} \in \pc  \eu{J}_g(h) $ by Lemma \ref{lm:clsubdi}\eqref{itm:cds.2}. Since 
	\begin{equation*}
	\lim_{n\rightarrow \infty} \eu{J}_g (e_n) = \eu{J}_g(h) = c  \geq m >  0,
	\end{equation*}
	it follows that
	\begin{equation}\label{eq:contr}
	\varphi(h,g) \neq \zero{\fs{G}}.
	\end{equation}
	By the chain rule (Theorem \ref{th:crll}), we have 
	\begin{equation*}
	\pc \eu{J}_g (h) \subset \pc \eu{I} (\varphi (h,g) ) \circ \dd_1 \varphi (h,g).
	\end{equation*}
	Therefore, there exists $v \in \pc \eu{I} (\varphi (h,g))$ such that 
	\begin{equation*}
	\zero{\fs{G}'} = v \circ  \dd_1 (h,g).
	\end{equation*}
	Since $ \dd_1 \varphi $ is invertible, it follows that $ v = \zero{\fs{G}'} $. Thus,  $\zero{\fs{G}'} \in \pc \eu{I} (\varphi (h,g))$ and hence \eqref{itm:gdp2}  implies that $\varphi(h,g)= \zero{\fs{G}} $  which contradicts \eqref{eq:contr}. 
	
	To conclude the proof, it is sufficient to define 
	\begin{equation*}
		\eu{K} (g) = p
	\end{equation*}
 for a given $ g \in \fs{F} $. Here, $ p $ is the solution to $ \varphi (p,g)= \zero{\fs{G}} $ obtained as above. The proof of Formula \eqref{eq:der1} is a straightforward  application of the chain rule.
\end{proof}
In the aforementioned theorem, it is assumed that the associated function $\eu{I} $ is locally Lipschitz, which is deemed advantageous.
However, non-smooth analysis is subtle and excessively technical, rendering it less practical. Thus, it would be also needed to assume that $\eu{I} $ is a Keller's $C_c^1$-mapping. Therefore, we will also consider this case.
While the approach remains similar, for the sake of clarity and comprehensiveness, we present it in full detail.

We will now revisit essential components related to Keller's $C_c^1$-mappings.
\begin{definition}[\cite{k2}, Definition 3.2, PS-Condition]\label{df:ps}
	Let $ \va : \fs{F} \to \rr $ be a Keller's $C_c^1$-mapping.
	We say that $\va$ satisfies the Palais-Smale condition, denoted as the $\mathrm{PS}$-condition, if every sequence $(x_i) \subset  \fs{F}$ for which
	$\va(x_i)$ is bounded and 
	\begin{equation*}
	\va'(x_i) \rightarrow 0 \quad \mathrm{ in} \quad  \fs{F}_{\mathrm{k}}' 
	\end{equation*}
	has a convergent subsequence.
	Additionally, we say that $\va$ satisfies the Palais-Smale condition at the level $c \in \rr$, the $(\mathrm{PS})_c$-condition, if each sequence $(x_i) \subset  \fs{F}$ for which
	\begin{equation*}
	\va(x_i) \to c \quad \mathrm{and} \quad \va'(x_i) \rightarrow 0 \quad \mathrm{ in} \quad  \fs{F}_{\mathrm{k}}',
	\end{equation*}		 		
	has a convergent subsequence.
\end{definition}
\begin{theorem}[\cite{k2}, Corollary 4.7]\label{co:minimizing}
	Let $ \va : \fs{F} \to \rr $ be a Keller's $C_c^1$-mapping that is bounded below.
	If the $(\mathrm{PS})_c$-condition holds with $c = \inf_{\fs{F}} \va$, then $\va$ achieves  its minimum at a critical point $ x_0 \in \fs{F} $ where $ \va(x_0)=c $.  
\end{theorem}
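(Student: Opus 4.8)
The plan is to run the classical argument: produce a Palais--Smale sequence at the level $c:=\inf_{\fs{F}}\va$, then pass to the limit. Since $\va$ is bounded below, $c$ is a finite real number. First I would recall, from the discussion following Theorem~\ref{th:crll}, that a Keller's $C_c^1$-functional is automatically locally Lipschitz with $\pc\va(x)=\{\va'(x)\}$, so that $\lambda_{\va,B}(x)=\snorm[B]{\va'(x)}$ is merely a weak$^*$-seminorm of the derivative. Applying Lemma~\ref{lem:mill} to $\va$ --- legitimate, since $\va$ is locally Lipschitz and bounded below --- then yields a sequence $(x_i)\subset\fs{F}$ with $\va(x_i)\to\inf_{\fs{F}}\va=c$ and $\snorm[B]{\va'(x_i)}\to 0$ for every finite set $B$.

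Next I would upgrade the conclusion $\snorm[B]{\va'(x_i)}\to0$, which only records that $\va'(x_i)\to 0$ weakly$^*$, to the statement $\va'(x_i)\to 0$ in $\fs{F}_{\mathrm{k}}'$ that is actually needed in order to invoke the hypothesis. The most transparent route is to redo the construction via Ekeland's variational principle directly on the complete metric space $(\fs{F},\mt_{\fs{F}})$: for each $n$ one obtains $x_n\in\fs{F}$ with $\va(x_n)\to c$ that is a strict minimizer of $y\mapsto\va(y)+\tfrac1n\mt_{\fs{F}}(y,x_n)$; a routine computation --- substituting $y=x_n\pm th$, dividing by $t>0$, letting $t\downarrow 0$, which is permissible because $\va$ is G\^ateaux differentiable --- then gives $\lvert\va'(x_n)(h)\rvert\le\tfrac1n\sum_{k}2^{-k}\snorm[\fs{F},k]{h}$ for every $h\in\fs{F}$, and one has to convert this into smallness of $\va'(x_n)$ uniform on compact subsets of $\fs{F}$.

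Given a sequence $(x_i)$ with $\va(x_i)\to c$ and $\va'(x_i)\to 0$ in $\fs{F}_{\mathrm{k}}'$, I would apply the $(\mathrm{PS})_c$-condition at the level $c=\inf_{\fs{F}}\va$ to extract a subsequence $x_{i_j}\to x_0$ in $\fs{F}$. Continuity of $\va$ gives $\va(x_0)=\lim_j\va(x_{i_j})=c=\inf_{\fs{F}}\va$, so $x_0$ realizes the minimum; and continuity of the derivative map $x\mapsto\va'(x)$, which is part of the very definition of Keller's $C_c^1$-differentiability, gives $\va'(x_0)=\lim_j\va'(x_{i_j})=0$ in $\fs{F}_{\mathrm{k}}'$, so $x_0$ is a critical point of $\va$. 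This completes the proof.

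The step I expect to be the main obstacle is the upgrade described in the second paragraph. Ekeland's principle is a purely metric statement, whereas the $(\mathrm{PS})_c$-condition is phrased in the locally convex dual $\fs{F}_{\mathrm{k}}'$ with the topology of uniform convergence on compacta, and the model functional $h\mapsto\sum_k 2^{-k}\snorm[\fs{F},k]{h}$ --- the directional derivative of $\mt_{\fs{F}}$ at the origin --- need not be bounded on a prescribed compact subset of $\fs{F}$; hence the Ekeland sequence, used naively, controls $\va'(x_n)$ only pointwise, i.e.\ weakly$^*$. Bridging this gap --- choosing the minimizing sequence with care, using $\pc\va(x)=\{\va'(x)\}$ and the deformation and pseudo-gradient techniques developed in \cite{k2} --- is where the genuine work resides.
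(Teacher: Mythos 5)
The statement you are proving is not actually proved in this paper: it is imported verbatim from \cite{k2} (Corollary 4.7), so there is no in-paper argument to compare against. Judged on its own terms, your proposal follows the only natural route — minimizing sequence, Palais--Smale condition, pass to the limit — and your final paragraph (extracting $x_{i_j}\to x_0$, then using continuity of $\va$ and of $x\mapsto\va'(x)$ to get $\va(x_0)=c$ and $\va'(x_0)=0$) is correct. But the proof as written has a genuine, and self-acknowledged, gap at its central step.

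The gap is exactly the one you flag: Definition \ref{df:ps} requires a sequence with $\va'(x_i)\to 0$ in $\fs{F}_{\mathrm{k}}'$, i.e.\ uniformly on compact subsets of $\fs{F}$, whereas both of your proposed constructions deliver strictly less. Lemma \ref{lem:mill} gives only $\lambda_{\va,B}(x_i)\to 0$ for finite $B$, which (since $\pc\va(x)=\{\va'(x)\}$) is weak$^*$ convergence, i.e.\ uniform convergence on finite sets. Your Ekeland variant gives $\lvert\va'(x_n)(h)\rvert\le\tfrac1n\sum_k 2^{-k}\snorm[\fs{F},k]{h}$, and the majorant $q(h)=\sum_k 2^{-k}\snorm[\fs{F},k]{h}$ is genuinely unbounded on some compact sets: in $\rr^{\nn}$ with $\snorm[k]{x}=\max_{i\le k}\lvert x_i\rvert$, the set $K=\{\zero{}\}\cup\{4^m e_m\}_m$ is compact while $q(4^m e_m)=2\cdot 2^m\to\infty$. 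So neither construction produces a sequence to which the $(\mathrm{PS})_c$ hypothesis applies, and your closing sentence — that bridging this ``is where the genuine work resides'' — is an admission that the proof is not finished rather than a proof. To close it one needs the machinery of \cite{k2} itself (an Ekeland-type principle whose perturbation is adapted to the compact bornology, or an equicontinuity argument letting one upgrade weak$^*$ smallness to smallness in $\fs{F}_{\mathrm{k}}'$ on equicontinuous sets); citing that machinery without executing it leaves the key estimate unproved.
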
 
\begin{theorem}[\cite{k3}, Theorem 2.3]\label{th:mpt} 
	Assume that $ \va: \fs{F} \to \rr $ is a Keller's $C_c^1$-mapping satisfying the $(\mathrm{PS})_c$-condition for ever $ c\in \rr $.  Let $ x_0 \in \fs{F} $. Consider an open neighborhood $U$ of $x_0 \in \fs{F}$, where $ \partial U$ denotes the boundary of $U$. 
	Assume that $x_1$ belongs to the distinct connected component  of $\fs{F} \setminus \partial{U} $. Suppose $\va$ satisfies the condition:
	\begin{equation}\label{mpt:geo}
	\inf_{p \in \partial U} \va(p) > \max \set{ \va(x_0),\va(x_1)} = a.
	\end{equation}
	Then $ \va $ has a critical value $ c > a $, which can be characterized as
	\begin{equation}\label{eq:mpt}
	c \coloneqq \inf_{\gamma \in \Gamma} \max_{t \in [0,1]} \va(\gamma(t)).
	\end{equation}
	Here, $$\Gamma \coloneqq \Big\{ \gamma \in \ftt{C} ([0,1], \fs{F})\mid \gamma (0)= x_0,\gamma(1)=x_1\in \fs{F} \Big\}$$ is the set of continuous paths joining $ x_0 $ and $ x_1 $.
\end{theorem}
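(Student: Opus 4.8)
The plan is to run the classical two-step minimax argument for the value $c$ in \eqref{eq:mpt}, transplanted to the Fr\'echet setting by using the metric $\mt_{\fs{F}}$ and the $(\mathrm{PS})_c$-condition in place of the Hilbert/Banach norm. First I would show that the minimax level exceeds $a$; then, assuming $c$ were not a critical value, I would deform a near-optimal path strictly below level $c$, contradicting the characterisation of $c$ as an infimum.

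\textbf{Step 1.} Since $U$ is open, $U\cap\partial U=\emptyset$, so $x_0\in U\subseteq\fs{F}\setminus\partial U$; by hypothesis $x_1$ lies in a connected component of $\fs{F}\setminus\partial U$ distinct from the one containing $x_0$. Hence for every $\gamma\in\Gamma$ the connected set $\gamma([0,1])$ cannot be contained in $\fs{F}\setminus\partial U$, so there is $t_\gamma\in(0,1)$ with $\gamma(t_\gamma)\in\partial U$, and therefore $\max_{t\in[0,1]}\va(\gamma(t))\ge\va(\gamma(t_\gamma))\ge\inf_{p\in\partial U}\va(p)$. Taking the infimum over $\gamma\in\Gamma$ and using \eqref{mpt:geo} gives $c\ge\inf_{p\in\partial U}\va(p)>a$. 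Moreover $c<\infty$: since $\fs{F}$ is convex, $\Gamma$ contains the segment joining $x_0$ and $x_1$, on whose compact image the continuous function $\va$ is bounded.

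\textbf{Step 2.} Suppose, for contradiction, that $c$ is not a critical value, i.e. $K_c\coloneqq\{x\in\fs{F}\mid\va(x)=c,\ \va'(x)=0\}=\emptyset$. Since $\va$ is Keller's $C_c^1$ and satisfies the $(\mathrm{PS})_c$-condition, I would invoke a quantitative deformation lemma for $C_c^1$-functionals on Fr\'echet spaces (of the kind developed in \cite{k2,k3}) to obtain $\epsilon\in\bigl(0,\tfrac{c-a}{2}\bigr)$ and a homeomorphism $\eta\colon\fs{F}\to\fs{F}$ with $\eta(x)=x$ whenever $\va(x)\notin[c-2\epsilon,\,c+2\epsilon]$, and $\eta(\{\va\le c+\epsilon\})\subseteq\{\va\le c-\epsilon\}$. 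Because $\epsilon<\tfrac{c-a}{2}$ we have $c-2\epsilon>a\ge\max\set{\va(x_0),\va(x_1)}$, so $\eta$ fixes both $x_0$ and $x_1$. By the definition of $c$ as an infimum, choose $\gamma\in\Gamma$ with $\max_{t\in[0,1]}\va(\gamma(t))<c+\epsilon$ and set $\tilde\gamma\coloneqq\eta\circ\gamma$; then $\tilde\gamma$ is continuous with $\tilde\gamma(0)=x_0$ and $\tilde\gamma(1)=x_1$, so $\tilde\gamma\in\Gamma$, while $\tilde\gamma([0,1])\subseteq\eta(\{\va\le c+\epsilon\})\subseteq\{\va\le c-\epsilon\}$, whence $\max_{t\in[0,1]}\va(\tilde\gamma(t))\le c-\epsilon<c$. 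This contradicts $c=\inf_{\gamma\in\Gamma}\max_{t}\va(\gamma(t))$. Therefore $c$ is a critical value, and $c>a$ by Step 1; it is characterised by \eqref{eq:mpt} by construction.

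\textbf{The main obstacle} is the deformation lemma itself, which is the only place the non-normable structure of $\fs{F}$ intervenes. In a Banach or Hilbert space the deformation is built from a pseudo-gradient field obtained via the norm (or Riesz duality), but here $\va'(x)\in\fs{F}'$ has no such representative. Instead one must construct a locally Lipschitz vector field on a neighbourhood of $\va^{-1}(c)$ that is compatible with the topology of $\fs{F}_{\mathrm{k}}'$ in which the $\mathrm{PS}$-condition is phrased, truncate and renormalise it using the metric $\mt_{\fs{F}}$, and integrate its flow: completeness of $\fs{F}$ makes the flow globally defined in time, and the $(\mathrm{PS})_c$-condition together with $K_c=\emptyset$ is exactly what forces $\va$ to drop by a fixed amount along the flow on the strip $\{c-\epsilon\le\va\le c+\epsilon\}$, yielding the two properties of $\eta$ used above. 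Granting that lemma, Steps 1 and 2 are the routine minimax computation.
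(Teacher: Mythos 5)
The theorem you are proving is not proved in this paper at all: it is quoted verbatim from \cite{k3} (Theorem 2.3 there), so the relevant comparison is with the argument in \cite{k2,k3}. Your Step 1 (every path in $\Gamma$ must meet $\partial U$, hence $c\ge\inf_{\partial U}\va>a$) is correct and is common to any proof. The problem is Step 2. You reduce everything to a quantitative deformation lemma for Keller's $C_c^1$-functionals on an arbitrary Fr\'echet space, and you acknowledge this is ``the main obstacle'' --- but the sketch you give for overcoming it does not work, and no such lemma is available in this generality. Two things break. First, a pseudo-gradient field requires comparing $\va'(x)\in\fs{F}'$ against a norm of $x$-directions: one needs $v$ with $\langle\va'(x),v\rangle$ bounded below by a fixed positive quantity while $v$ stays in a bounded set, and with only a family of seminorms (and the non-homogeneous metric $\mt_{\fs{F}}$) there is no way to normalise; ``$\va'(x_i)\to 0$ in $\fs{F}'_{\mathrm{k}}$'' gives no uniform lower bound on the achievable decrease rate. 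Second, even granting a locally Lipschitz vector field, its flow need not exist: Picard--Lindel\"of fails in Fr\'echet spaces (the contraction argument requires a homogeneous norm), and indeed Section 4 of this very paper treats the mere solvability of an ODE in a Fr\'echet space as a hard theorem requiring the global implicit function machinery. So ``completeness of $\fs{F}$ makes the flow globally defined'' is false.

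The route actually taken in the cited source avoids deformations entirely: one applies Ekeland's variational principle to the functional $\gamma\mapsto\max_{t\in[0,1]}\va(\gamma(t))$ on the complete metric space $\Gamma$, which produces a Palais--Smale sequence $(x_i)$ with $\va(x_i)\to c$ and $\va'(x_i)\to 0$ in $\fs{F}'_{\mathrm{k}}$; the $(\mathrm{PS})_c$-condition then yields a convergent subsequence whose limit is a critical point at level $c$. This two-stage structure (first a PS sequence, then compactness) is visible in the statement of the locally Lipschitz analogue, Theorem \ref{th:mptll}, in this paper. To repair your proof you would either have to prove the deformation lemma you invoke --- which, for the reasons above, is not a routine transplant from the Banach case --- or switch to the Ekeland-based argument.
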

\begin{theorem}[Global Implicit Function Theorem II]\label{th:gift2}
	Let  $ \fs{G}\; $ be  a Fr\'{e}chet space, and let $\varphi  : \fs{E} \times \fs{F} \rightarrow \fs{G} $ be a Keller's $ C_c^1 $-mapping. Suppose $ \eu{I} : \fs{G}
	\rightarrow [0, \infty ]$ is a Keller's $ C_c^1 $-mapping with the following two properties.
	\begin{enumerate}[label={\bf (SGP\arabic*)},ref=SGP\arabic*]
		\itemindent=14pt
		\item \label{itm:sgdp.1} $\eu{I}(x)=0$ if and only if $x=\zero{\fs{G}}$,
		\item \label{itm:sgdp.2} $\eu{I}'(y)=0$ if and only if $y =\zero{\fs{G}}$.
	\end{enumerate}
	Suppose that the following two conditions hold.
	\begin{enumerate}[label={\bf (SGIF\arabic*)},ref=SGIF\arabic*]
		\itemindent=18pt
		\item \label{itm:sgift.1} For any $ f \in \fs{F} $, the function $\eu{J}_f: \fs{E} \to [0,\infty]$  defined by $ \eu{J}_f(e) = \eu{I} (\varphi (e,f) ) $ satisfies the Palais-Smale condition at all levels,
		\item \label{itm:sgift.2} the partial derivative  in the first variable $ \dd_1 \varphi: \fs{E} \rightarrow \fs{G} $ is bijective. 
	\end{enumerate}
	Then,  there exists a unique Keller's $ C_c^1 $-mapping $ \eu{K}: \fs{F}\rightarrow \fs{E} $ such that, for any $ g \in \fs{F} $, we have $ \varphi (\eu{K} (g),g)= \zero{\fs{G}}$. Moreover, the derivative $  \eu{K}'(g) $ is given by the formula
	\begin{equation}\label{eq:der2}
	\eu{K}' (g) = -\left[\dd_1 \varphi (\eu{K}(g),g)\right]^{-1} \circ \dd_2 \varphi (\eu{K}(g),g).
	\end{equation}
\end{theorem}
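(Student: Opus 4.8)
The plan is to reproduce the argument of Theorem~\ref{th:gift1} almost line for line, replacing each nonsmooth ingredient by its smooth counterpart: Lemma~\ref{lem:mill} is replaced by Theorem~\ref{co:minimizing}, Theorem~\ref{th:mptll} by the mountain pass theorem~\ref{th:mpt}, the Clarke chain rule (Theorem~\ref{th:crll}) by the ordinary chain rule for $C_c^1$-mappings, the containment $\zero{\fs{G}'}\in\pc\eu{J}_g(p)$ at a local minimum by the vanishing of the $C_c^1$-derivative there (which is already built into the conclusion of Theorem~\ref{co:minimizing}), and the hypotheses \eqref{itm:gdp1}, \eqref{itm:gdp2} by \eqref{itm:sgdp.1}, \eqref{itm:sgdp.2}. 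Throughout, I would use that the partial differentials $\dd_1\varphi(e,f)\in\lc(\fs{E},\fs{G})$ are continuous linear bijections between Fr\'{e}chet spaces, hence topological isomorphisms by the open mapping theorem, so that each $[\dd_1\varphi(e,f)]^{-1}$ is again a continuous linear, hence $C_c^1$, map.

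First, fix $g\in\fs{F}$ and set $\eu{J}_g(e)=\eu{I}(\varphi(e,g))$. Since $\varphi$ and $\eu{I}$ are $C_c^1$, so is $\eu{J}_g$, and it is bounded below by $0$. By \eqref{itm:sgift.1} it satisfies the $(\mathrm{PS})_c$-condition at $c=\inf_{\fs{E}}\eu{J}_g$, so Theorem~\ref{co:minimizing} yields a minimizer $p$ with $\eu{J}_g'(p)=0$. The chain rule gives $\eu{J}_g'(p)=\eu{I}'(\varphi(p,g))\circ\dd_1\varphi(p,g)$; composing on the right with $[\dd_1\varphi(p,g)]^{-1}$ forces $\eu{I}'(\varphi(p,g))=0$, and \eqref{itm:sgdp.2} then gives $\varphi(p,g)=\zero{\fs{G}}$.

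For uniqueness I would argue by contradiction exactly as before. If $e_1\neq p$ also satisfies $\varphi(e_1,g)=\zero{\fs{G}}$, then $\eu{J}_g(e_1)=\eu{J}_g(p)=0$; after translating so that $e_1=\zero{\fs{E}}$, property \eqref{itm:sgdp.1} shows $\eu{J}_g>0$ on the boundary of a small metric ball $\zero{\fs{E}}+\mathbf{r}\bo_{\mt_\fs{E}}$ chosen not to contain $p$, so the geometric condition \eqref{mpt:geo} of Theorem~\ref{th:mpt} holds with $x_0=\zero{\fs{E}}$ and $x_1=p$. The theorem then produces a critical point $h$ of $\eu{J}_g$ with $\eu{J}_g(h)=c>0$, whence $\varphi(h,g)\neq\zero{\fs{G}}$; but the chain rule together with the invertibility of $\dd_1\varphi(h,g)$ again gives $\eu{I}'(\varphi(h,g))=0$, so $\varphi(h,g)=\zero{\fs{G}}$ by \eqref{itm:sgdp.2}, a contradiction. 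Hence $p$ is unique, and we set $\eu{K}(g)=p$.

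It remains to show $\eu{K}$ is $C_c^1$ with derivative \eqref{eq:der2}; formally this comes from differentiating the identity $\varphi(\eu{K}(g),g)=\zero{\fs{G}}$ and solving for $\eu{K}'(g)$ using $[\dd_1\varphi(\eu{K}(g),g)]^{-1}$, exactly as in Theorem~\ref{th:gift1}. I expect the genuine obstacles to be twofold. First, to invoke Theorem~\ref{th:mpt} one must verify its topological hypothesis that $p$ and $\zero{\fs{E}}$ lie in different connected components of $\fs{E}\setminus\partial(\zero{\fs{E}}+\mathbf{r}\bo_{\mt_\fs{E}})$, which is precisely why the separating set is taken to be a metric ball and the value $m>0$ on its boundary is used. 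Second, and more delicate in the Fr\'{e}chet setting, where no general local implicit function theorem is available, is establishing that $\eu{K}$ is continuous and then $C_c^1$: one should control the difference quotients of $\eu{K}$ directly, using that every $\dd_1\varphi(e,f)$ is a topological isomorphism with inverse depending continuously on $(e,f)$, and then read off \eqref{eq:der2} together with its continuity in $g$. This last point, rather than the formal transcription of the Theorem~\ref{th:gift1} argument, is where the real work lies.
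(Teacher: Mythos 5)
Your proposal follows the paper's own proof essentially line for line: minimization via Theorem~\ref{co:minimizing} to produce a zero of $\varphi(\cdot,g)$, the mountain pass theorem~\ref{th:mpt} to rule out a second zero, the smooth chain rule plus invertibility of $\dd_1\varphi$ at each stage, and the formal differentiation of $\varphi(\eu{K}(g),g)=\zero{\fs{G}}$ for \eqref{eq:der2}. The two ``obstacles'' you flag at the end (the separation hypothesis of Theorem~\ref{th:mpt} and the regularity of $\eu{K}$) are in fact passed over just as quickly in the paper, which simply asserts that \eqref{eq:der2} follows ``easily'' from the chain rule, so your treatment is, if anything, more candid about where the remaining work lies.
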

\begin{proof}
	Let $ g \in \fs{F} $, and define the function $ \eu{J}_g(e) = \eu{I} (\varphi (e,g) ) $.
	By Theorem \ref{co:minimizing}, there exists a sequence $ (e_n)\subset \fs{E} $ such that 
	\begin{equation*}
	\lim_{n \rightarrow \infty} \eu{J}_g(e_n) =\inf_{\fs{E}} \eu{J}_g.
	\end{equation*}
	By \eqref{itm:sgift.1}, $ \eu{J}_g $ satisfies the PS-condition. Therefore, the sequence $ (e_n) $ has a convergent subsequent, denoted once again by 
	$(e_n)$,
	with the limit $ p $. This point is a critical point of $ \eu{J}_g $, and thus  $ \eu{J}_g'(p)=0 $.
	By the chain rule (\cite[Corollary 1.3.2]{ke}), we have: 
	\begin{equation*}
	\eu{J}_g' (p) =  \eu{I}' (\varphi (p,g))  \circ \dd_1 \varphi (p,g)=0. 
	\end{equation*}
	Since the derivative $ \dd_1 \varphi $ is invertiable at $ p $ by \eqref{itm:sgift.2}, it follows that $ \eu{I}' (\varphi (p,g)) = \zero{\fs{E}'} $. Therefore,  our assumption on $ \eu{I}$ implies that 
	\begin{equation*}
	\varphi(p,g)= \zero{\fs{G}}.
	\end{equation*}
	Now, we prove  by contradiction that $ p $ is the only point for which $\varphi(p,g)= \zero{\fs{G}}$. 
	Let $ e_1 \neq p \in \fs{E} $ be such that  $$ \varphi (e_1,g) =\varphi(p,g) = \zero{\fs{G}}. $$ 
	It follows that $ \eu{J}_g (e_1) = \eu{J}_g(p) = 0 $, from the definition of the function  $ \eu{J}_g(e) = \eu{I} (\varphi(e, g)) $.
	
	Let $ \mathbf{r} > 0 $ be  small enough  such that $ p \notin \Cl{e_1+\bl{r}\bo_{\mt_\fs{E}}} $.
	Without loss of generality, we can suppose that $ e_1 = \zero{\fs{E}}$. 
	For any $ e \in \partial (\zero{\fs{E}}+\bl{r}\bo_{\mt_\fs{E}}) $,  by  \eqref{itm:sgdp.1}, we have $\eu{I}(e) \neq 0 $. Consequently, 
	\begin{equation*}
	0 < m \leq \eu{J}_g (e) \quad \text{for some} \, m.
	\end{equation*}
	Thus, all assumptions of Theorem~\ref{th:mpt} are satisfied. Hence, there exists a sequence $ ( e_n ) \subset\fs{ E} $ such that 
	\begin{equation*}
	\lim_{n \rightarrow \infty} \eu{J}_g (e_n) = c,
	\end{equation*}
	for some $ c > m  $ as characterized by
	\eqref{eq:mpt}. Since $ \eu{J}_g (e_n) $ satisfies the $\pss$-condition at $ c $, it has a convergent subsequent, denoted again by $ (e_n) $,  with the limit $h$. Therefore, $ h $ is a critical point, and thus  $\eu{J}_g'(h)=0 $. Since 
	\begin{equation*}
	\lim_{n\rightarrow \infty} \eu{J}_g (e_n) = \eu{J}_g(h) = c  \geq m >  0,
	\end{equation*}
	it follows that 
	\begin{equation}\label{eq:conti2}
	\varphi(h,g) \neq \zero{\fs{G}}.
	\end{equation}
	By the chain rule (\cite[Corollary 1.3.2]{ke}), we have 
	\begin{equation*}
	\eu{J}'_g (h) =  \eu{I}' (\varphi (h,g) ) \circ \dd_1 \varphi (h,g) =0.
	\end{equation*}
	Since $ \dd_1 \varphi $ is invertible, it follows that $ \eu{J}'_g (h) = \zero{\fs{G}'} $. 
	Thus, \eqref{itm:sgdp.2}  implies that $\varphi(h,g)= \zero{\fs{G}} $,  which  contradicts \eqref{eq:conti2}. 
	To conclude the proof, it is enough to define 
	$
	\eu{K} (g) = p$
	for given $ g \in \fs{F} $. Here, $ p $ is the solution to $ \varphi (p,g)= \zero{\fs{G}} $ obtained as above.  By using the chain rule, we can easily obtain \eqref{eq:der2}.
\end{proof}
\begin{remark}
	The sole distinction between these two implicit function theorems lies in the assumptions concerning the associated functionals. Depending on a particular application we may apply either of them. In the rest of the paper, we can interchangeably use both classes of associated functionals. For convenience, we denote by $ \bl{AU}(\fs{F}, [0,\infty]) $ the set of all functionals  $\eu{I}: \fs{F} \to \rr$ such that either:   
	\begin{enumerate}
		\item $\eu{I}$ is a Keller's $ C^1_c $ satisfying \eqref{itm:sgdp.1} and \eqref{itm:sgdp.2} in Theorem \ref{th:gift2}, or
		\item $\eu{I}$ is a locally Lipchitz function satisfying \eqref{itm:gdp1} and \eqref{itm:gdp2} in Theorem \ref{th:gift1}.
	\end{enumerate}
\end{remark} 
The primary implication  of these theorems is the following global inversion  theorem.
\begin{theorem}\label{th:inv}
	Assume that $\varphi  : \fs{E} \to \fs{F} $ is a Keller's $ C_c^1 $-mapping,  $ \eu{I} \in \bl{AU}(\fs{F}, [0,\infty]) $, and
	the following conditions are satisfied:
	\begin{enumerate}[label={\bf (IFT\arabic*)},ref=IFT\arabic*]
		\itemindent=8pt
		\item \label{ift.1} for any $ f \in \fs{F} $, the function $\eu{J}_f: \fs{E} \to [0,\infty]$  defined by $\eu{J}_f(e) = \eu{I}(f-\va(e))$ satisfies the $\mathrm{ PS}$-condition at any level if $ \eu{I} $
		is a Keller's $ C^1_c $ function. Moreover, it satisfies the Chang $\mathrm{ PS}$-condition at any level if  $ \eu{I} $ is locally Lipschitz. 
		\item \label{ift.2} The derivative $ \va'(e): \fs{E} \to \fs{F} $ is bijective for any $ e \in \fs{E} $.
	\end{enumerate}
	Then, $ \va $ is a global Keller's $ C_c^1 $-diffeomorphism.
\end{theorem}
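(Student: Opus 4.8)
The plan is to recognise the inversion problem as a special case of the implicit equation already solved in Theorems \ref{th:gift1} and \ref{th:gift2}. First I would introduce the auxiliary map
\begin{equation*}
\Phi : \fs{E} \times \fs{F} \to \fs{F}, \qquad \Phi(e,f) = f - \va(e),
\end{equation*}
which is a Keller's $C_c^1$-mapping because $\va$ is and subtraction is continuous and linear. Solving $\va(e) = f$ is exactly solving $\Phi(e,f) = \zero{\fs{F}}$ for $e$ in terms of $f$, so here the first variable of the implicit function theorems is $e$, the second is $f$, and the target space $\fs{G}$ is taken to be $\fs{F}$. Computing partial derivatives gives $\dd_1 \Phi(e,f) = -\va'(e)$ and $\dd_2 \Phi(e,f) = \id_{\fs{F}}$; by \eqref{ift.2} the map $\dd_1 \Phi$ is bijective at every point, which is precisely the hypothesis \eqref{itm:gift.2} (resp.\ \eqref{itm:sgift.2}).

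Next I would verify the Palais--Smale hypothesis. The functional associated with $\Phi$ at a point $f \in \fs{F}$ is $e \mapsto \eu{I}(\Phi(e,f)) = \eu{I}(f - \va(e)) = \eu{J}_f(e)$, i.e.\ exactly the functional appearing in \eqref{ift.1}; moreover $\eu{I} \in \bl{AU}(\fs{F},[0,\infty])$ supplies exactly the structural properties \eqref{itm:gdp1}--\eqref{itm:gdp2} (resp.\ \eqref{itm:sgdp.1}--\eqref{itm:sgdp.2}) demanded of the outer function, and the composition $\eu{I}\circ\Phi(\cdot,f)$ is locally Lipschitz (resp.\ Keller's $C_c^1$) by the chain rule. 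Hence, according to whether $\eu{I}$ is locally Lipschitz or Keller's $C_c^1$, all hypotheses of Theorem \ref{th:gift1} or of Theorem \ref{th:gift2} are met, and either theorem produces a unique Keller's $C_c^1$-mapping $\eu{K} : \fs{F} \to \fs{E}$ with $\Phi(\eu{K}(g),g) = \zero{\fs{F}}$ for every $g$, that is, $\va(\eu{K}(g)) = g$. In particular $\va$ is surjective and $\eu{K}$ is a $C_c^1$ right inverse.

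It then remains to upgrade $\eu{K}$ to a two-sided inverse. If $\va(e_1) = \va(e_2) = g$, then $e_1$ and $e_2$ both solve $\Phi(\cdot,g) = \zero{\fs{F}}$, so the uniqueness clause of the implicit function theorem forces $e_1 = e_2$; hence $\va$ is injective, therefore bijective, and $\eu{K} = \va^{-1}$, so that $\eu{K} \circ \va = \id_{\fs{E}}$ as well. Finally, specialising the derivative formula \eqref{eq:der1} (or \eqref{eq:der2}) yields
\begin{equation*}
\eu{K}'(g) = -\left[\dd_1 \Phi(\eu{K}(g),g)\right]^{-1} \circ \dd_2 \Phi(\eu{K}(g),g) = \left[\va'(\eu{K}(g))\right]^{-1},
\end{equation*}
so $\va$ and its inverse $\eu{K}$ are both Keller's $C_c^1$-mappings, i.e.\ $\va$ is a global Keller's $C_c^1$-diffeomorphism. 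I do not expect a genuine analytic obstacle here, since everything substantial is already packaged into Theorems \ref{th:gift1} and \ref{th:gift2}; the only points needing care are orienting the two variables so that the \emph{bijective} derivative is $\dd_1\Phi$ (and not $\dd_2\Phi$, which is the identity), and invoking the uniqueness part of those theorems to pass from a right inverse to injectivity.
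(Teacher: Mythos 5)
Your proposal is correct and follows essentially the same route as the paper: the paper likewise defines $\eu{F}(e,f)=f-\va(e)$, checks that the partial derivative in $e$ is bijective and that $\eu{I}\circ\eu{F}(\cdot,f)=\eu{J}_f$ satisfies the required Palais--Smale condition, and then invokes Theorem \ref{th:gift1} or \ref{th:gift2} to obtain $\eu{K}=\va^{-1}$ together with the derivative formula. Your added care about the sign of $\dd_1\Phi$ and the explicit passage from pointwise uniqueness to injectivity only makes explicit what the paper leaves as ``obviously''.
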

\begin{proof}
	We assume that $ \eu{I} $ is a Keller's $ C^1_c $-function. The proof remains the same in the case  where $ \eu{I} $ is locally Lipschitz. Define the mapping
	\begin{equation}
	\eu{F}: \fs{E} \times \fs{F} \to \fs{F}, \quad \eu{F}(e,f)=f-\va(e).		
	\end{equation}
	It belongs to the class Keller's $ C^1_c $, and by \eqref{ift.1} the function 
	\begin{equation*}
	\eu{I}(\eu{F}(e,f)) = \eu{I}(f-\va(e))
	\end{equation*}
	satisfies the PS-condition at any level. Moreover, $ \dd_1 \eu{F} = \va' $ is bijective by \eqref{ift.2}. Thus, all the assumptions of the global implicit function are met. Hence, there exists a unique Keller's $ C_c^1 $-mapping $ \eu{K} : \fs{F} \to \fs{E}$ such that 
	\begin{equation*}
	\eu{F} (\eu{K}(f),f) =\zero{\fs{F}}.
	\end{equation*}
	Obviously,   $ \eu{K}(f) =\va^{-1}(f) $ and
	$
	\dd \va^{-1}(f) = \left[\dd \va (\va^{-1}(f))\right]^{-1} .
	$
\end{proof}
\begin{remark}
	In \cite[Theorem 3.1]{k3}, an analogue  of this theorem was proved for Keller's $C_c^2$-tame mappings. Adding,  the tame assumption is necessary to apply the Nash-Moser inverse function theorem. However, we do not require that assumption, and the remaining assumptions of the theorem are slightly less restrictive than those  in  \cite[Theorem 3.1]{k3}.
\end{remark}

\section{An Initial Value Problem}
In this section, we employ the implicit function theorems to establish sufficient conditions for solving the following initial value problem:
\begin{equation}\label{eq:ivp}
\begin{cases}
y'(t)= \upphi ( t, y(t), e ), \quad \forall t \in \bl{I}=[t_0-a,t_0+a] \\
y(t)=f.
\end{cases}
\end{equation}
Here, the values $y(t)$  belong to the \fr space $ \fs{F} $, $ a>0 $, $t_0  \in \rr$, and
$ e $ belongs to the \fr space $ \fs{E} $. In addition, we suppose that $ \upphi : [-1,1]\times \fs{F} \times \fs{E} \to \fs{F} $ is a Keller's $ C_c^1 $-mapping. 
This is a significant and challenging problem beyond the Banach case, and many attempts have been made to provide non-restrictive conditions to solve it.

To solve \eqref{eq:ivp}, we reformulate it as a functional equation. The following spaces will be required.
The space $ \fs{C}_0 \coloneqq \fs{C}^0([-1,1], \fs{F}) $ consists of continuous mapping from $ [-1,1] $ to $ \fs{F} $ is also a \fr space defined by the seminorms:
\begin{equation}
\snorm[\fs{C}_0,n]{u(\cdot)} = \sup_{t \in [-1,1]} \snorm[\fs{F},n]{u(t)}.
\end{equation}
The metric $ \mt_{\fs{C}_0} $, defined as in \eqref{metric}, generates the same topology on $ \fs{C}_0 $.

We denote by $ \fs{C}^k \coloneqq \fs{C}^k ([-1,1],\fs{F})$  the space of all Keller's $ C_c^k $-mappings from $ [-1,1] $ to $ \fs{F} $, $k \geq 1$. This space constitutes a \fr space equipped with the topology
defined by the following family of seminorms:
\begin{equation}
\snorm[	\fs{C}^k,n]{u(\cdot)} = \max_{0 \leq i \leq k} \sup_{t \in [-1,1]} \snorm[\fs{F},n]{u^i(t)}, \;  \text{where} \,u^0(t) \coloneqq u(t).
\end{equation}
This topology is also defined by the metric $ \mt_{\fs{C}} $, as defined  in Equation \eqref{metric}. 

With the aforementioned notations, we now move forward to establish the following result:
\begin{theorem}\label{th:ivp}
	Let $ \eu{I} \in \bl{AU}(\rr \times  \fs{F} \times \fs{E}, [0,\infty]) $, and suppose that the following condition holds:
	\begin{enumerate}[label={\bf (C)},ref=C]
		\item\label{itm:ivp.1} for any $ \psi \in \fs{C}^2 ([-1,1],\fs{F})$, the mapping
		\begin{gather*}
		\eu{J}_{\psi}:  \rr \times  \fs{F} \times \fs{E} \to [0,\infty] \\
		\eu{J}_{\psi} (a,f,e) = \eu{I}\big(\psi'(s) - a\upphi(t_0+as, \psi(s)+f,e)\big),  \quad  \text{for}\, s \in [-1,1].
		\end{gather*}
		satisfies the $\mathrm{ PS}$-condition at any level if $ \eu{I} $
		is a Keller's $ C^1_c $ function. Moreover, it satisfies the Chang $\mathrm{ PS}$-condition at any level if  $ \eu{I} $ is locally Lipchitz. 
	\end{enumerate}
	Then, there exists $ b \in (0,a]$ such that the IVP \eqref{eq:ivp} has a unique solution $ y= y(t;f,e) \in \fs{C}^2 ([-1,1],\fs{F})$  for each
	$ (f,e) \in \fs{F} \times \fs{E} $. In addition, the mapping
	\begin{gather}\label{map:psi}
	\uppsi:(t_0-b,t_0+b) \times \fs{F} \times \fs{E} \to \fs{F}, \quad (t,f,e) \mapsto y(t;f,e)
	\end{gather}
	is of class Keller's $ C_c^1 $.
\end{theorem}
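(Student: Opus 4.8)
The plan is to rescale time to the fixed interval $[-1,1]$, recast the rescaled problem as a single functional equation of the shape \eqref{eq:2} between Fr\'{e}chet spaces of curves, and then invoke Theorem~\ref{th:gift1} or Theorem~\ref{th:gift2}. Fix $b\in(0,a]$, to be shrunk later, and set $\fs{X}\coloneqq\{u\in\fs{C}^1([-1,1],\fs{F}):u(0)=\zero{\fs{F}}\}$, a closed linear subspace of $\fs{C}^1([-1,1],\fs{F})$, hence a Fr\'{e}chet space. For $(f,e)\in\fs{F}\times\fs{E}$, a $C^2$ curve $y$ on $(t_0-b,t_0+b)$ solves \eqref{eq:ivp} if and only if $u(s)\coloneqq y(t_0+bs)-f$ lies in $\fs{X}$ and satisfies $u'(s)=b\,\upphi(t_0+bs,\,u(s)+f,\,e)$ on $[-1,1]$; since the right-hand side is a $C^1$ curve in $s$ whenever $u\in\fs{C}^1$, any such $u$ automatically has $u'\in\fs{C}^1$, i.e.\ $u\in\fs{C}^2([-1,1],\fs{F})$ and $y\in\fs{C}^2$. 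Accordingly, define
\begin{equation*}
\Phi\colon\fs{X}\times(\fs{F}\times\fs{E})\longrightarrow\fs{C}_0,\qquad
\Phi\bigl(u,(f,e)\bigr)(s)=u'(s)-b\,\upphi\bigl(t_0+bs,\,u(s)+f,\,e\bigr),
\end{equation*}
so that zeros of $\Phi(\,\cdot\,,(f,e))$ correspond bijectively to solutions of \eqref{eq:ivp} on $(t_0-b,t_0+b)$. The superposition operator induced by the $C^1_c$-mapping $\upphi$ is a Keller's $C^1_c$-mapping into $\fs{C}_0$ — this is the one place where $\upphi$ being merely $C^1_c$ forces the target to carry one derivative less than the domain, which is the source of the ``loss of one derivative'' in \eqref{eq:las} — and $u\mapsto u'$ is continuous linear $\fs{X}\to\fs{C}_0$; hence $\Phi$ is a Keller's $C^1_c$-mapping, with
\begin{equation*}
\dd_1\Phi\bigl(u,(f,e)\bigr)w=\Bigl(s\mapsto w'(s)-b\,\dd_2\upphi\bigl(t_0+bs,u(s)+f,e\bigr)\,w(s)\Bigr),\qquad w\in\fs{X}.
\end{equation*}

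Put $\eu{J}_{(f,e)}(u)=\eu{I}\bigl(\Phi(u,(f,e))\bigr)$, the functional associated with $\Phi$; this is exactly the functional $\eu{J}_{\psi}$ of condition~\eqref{itm:ivp.1} (with $u$ in the role of $\psi$ and $b$ in the role of $a$), so \eqref{itm:ivp.1} supplies the Palais--Smale hypothesis \eqref{itm:gift.1}/\eqref{itm:sgift.1}, while $\eu{I}\in\bl{AU}(\,\cdot\,,[0,\infty])$ supplies the normalization hypotheses \eqref{itm:gdp1}--\eqref{itm:gdp2} (resp.\ \eqref{itm:sgdp.1}--\eqref{itm:sgdp.2}). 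There remains only the bijectivity hypothesis \eqref{itm:gift.2}/\eqref{itm:sgift.2}: that $\dd_1\Phi(u,(f,e))\colon\fs{X}\to\fs{C}_0$ is bijective for \emph{every} $(u,(f,e))$ — equivalently that, writing $A_u(s)\coloneqq\dd_2\upphi(t_0+bs,u(s)+f,e)\in\lc(\fs{F},\fs{F})$, the \emph{linear} IVP $w'=b\,A_u(\cdot)w+g$, $w(0)=\zero{\fs{F}}$ has a unique solution $w\in\fs{X}$ for each $g\in\fs{C}_0$. Granting this, Theorem~\ref{th:gift1} (when $\eu{I}$ is locally Lipschitz) or Theorem~\ref{th:gift2} (when $\eu{I}$ is Keller's $C^1_c$) produces a unique Keller's $C^1_c$-mapping $\eu{K}\colon\fs{F}\times\fs{E}\to\fs{X}$ with $\Phi(\eu{K}(f,e),(f,e))=\zero{\fs{C}_0}$ for all $(f,e)$. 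Then $y(t;f,e)\coloneqq f+\eu{K}(f,e)\bigl((t-t_0)/b\bigr)$ is, for each $(f,e)$, the unique solution of \eqref{eq:ivp} on $(t_0-b,t_0+b)$, of class $\fs{C}^2([-1,1],\fs{F})$ after reparametrization; and since $\eu{K}$ is $C^1_c$ into $\fs{X}\subset\fs{C}^1([-1,1],\fs{F})$ while the evaluation map $\fs{C}^1([-1,1],\fs{F})\times(-1,1)\to\fs{F}$, $(v,\tau)\mapsto v(\tau)$, is Keller's $C^1_c$ (its directional derivative at $(v,\tau)$ in direction $(w,h)$ is $w(\tau)+h\,v'(\tau)$, which depends continuously on $(v,\tau)$), the chain rule for $C^1_c$-mappings gives that $\uppsi$ in \eqref{map:psi} is Keller's $C^1_c$.

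I expect the bijectivity of $\dd_1\Phi$ — unique solvability of the linearized IVP \emph{with no loss of derivative}, uniformly in $(u,(f,e))$ for one fixed $b$ — to be the crux of the argument. In the Banach case one passes to the integrated form $w=b\,K_uw+\int_0^{\cdot}g$ with $K_uw(s)=\int_0^sA_u(\sigma)w(\sigma)\,d\sigma$ and shrinks $b$ so that $b\,K_u$ is a uniform contraction. In the Fr\'{e}chet setting $A_u(s)$ is merely \emph{continuous} linear on $\fs{F}$, so the natural bound for $K_u$ shifts the seminorm index and no uniform contraction in a single seminorm is available; this is the obstruction that the Palais--Smale hypothesis \eqref{itm:ivp.1} is designed to absorb. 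Pushing this through — using that the principal part $w\mapsto w'$ of $\dd_1\Phi$ is already a topological isomorphism $\fs{X}\to\fs{C}_0$ (with inverse $g\mapsto\int_0^{\cdot}g$, which \emph{gains} a derivative), that $[-1,1]$ is compact, and that the given datum is the $C^1_c$-mapping $\upphi$ itself rather than an abstract operator — so as to obtain a single admissible $b$, is the one genuinely delicate point; one may alternatively route it through the global inversion Theorem~\ref{th:inv} applied to the affine map $w\mapsto w-b\,K_uw$ with the same associated functional.
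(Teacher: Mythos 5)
Your overall architecture is the same as the paper's: rescale time to $[-1,1]$, recast the rescaled IVP as a functional equation on a Fr\'{e}chet space of curves vanishing at $0$, and feed it to Theorem~\ref{th:gift1}/\ref{th:gift2}; your evaluation-map argument for the regularity of $\uppsi$ is a clean substitute for the paper's direct verification that the partial derivatives $y_t$ and $y_f$ are continuous. The cosmetic differences (you work with $\fs{C}^1\to\fs{C}_0$ plus a bootstrap, the paper with $\fs{D}\subset\fs{C}^2\to\fs{C}^1$; you fix $b$ and treat only $(f,e)$ as parameters, whereas the paper keeps $a$ as an argument of $\eu{F}$ and uses the resulting differentiability in $a$ to handle the $t$-dependence of $\uppsi$) are immaterial.

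The one substantive issue is the bijectivity of $\dd_1\Phi$, which you correctly compute as $w\mapsto w'-b\,\dd_2\upphi(t_0+b\,\cdot\,,u(\cdot)+f,e)\,w$ and then leave unproven; as written, that is a genuine gap, since unique solvability of the linear nonautonomous problem $w'=bA_uw+g$, $w(0)=\zero{\fs{F}}$, in an arbitrary Fr\'{e}chet space is exactly the kind of statement that can fail without extra hypotheses, and, as you note, shrinking $b$ does not yield a contraction in any single seminorm. You should be aware, though, that the paper does not close this gap either: its proof asserts that differentiating $\eu{F}$ with respect to $\bl{y}$ gives $\eu{F}_{\bl{y}}(\cdot)w=w'$ --- i.e., it drops the term $-a\,\dd_2\upphi(t_0+as,\bl{y}(s)+f,e)\,w(s)$ altogether --- and then deduces bijectivity from the fundamental theorem of calculus ($g\mapsto\int_0^{\cdot}g$ inverting $w\mapsto w'$ on $\fs{D}$). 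That computation is valid only when $\upphi$ does not depend on its second argument. So your derivative is the correct one and the obstruction you flag is real; the honest ways out are either to prove solvability of the linearized equation (for instance by applying Theorem~\ref{th:inv} to $w\mapsto w-bK_uw$ under its own Palais--Smale hypothesis, as you suggest, which would then have to be added to the statement) or to restrict to data for which the lower-order term vanishes. Your proposal is therefore no less complete than the paper's argument, but neither constitutes a full proof of the bijectivity hypothesis.
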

\begin{proof}
	We assume that $ \eu{I} $ is a Keller's $ C^1_c $-function. The proof remains the same in the case  where $ \eu{I} $ is locally Lipchitz.	
	Using the following substitutions: 
	\begin{align}\label{eq:funcy}
	&s= (t-t_0) / a, \nonumber\\
	&\bl{y}(s; t_0,f)= y(t_0+as;f)-f, \quad \forall s \in \bl{J}=[-1,1],
	\end{align}
	we will reformulate the IVP \eqref{eq:ivp} into a more convenient form.
	Thus, the IVP \eqref{eq:ivp} transforms into the following problem:
	\begin{equation}\label{eq:ivp2}
	\begin{cases}
	\bl{y}'(s)= a\upphi ( t_0+as, \bl{y}(s)+f,e), \quad \forall s \in \bl{J}, \\
	\bl{y}'(0) = \zero{\fs{F}}.
	\end{cases}
	\end{equation}
	Let $ \fs{D} \coloneqq \Set{\va \in \fs{C}^2 (\bl{J},\fs{F})}{\va(0) = \zero{\fs{F}}} $, which is a closed linear subspace of $ \fs{C}^2 (\bl{J},\fs{F})$, and therefore is a \fr space.
	We cast \eqref{eq:ivp2} as a functional equation by introducing the Keller's $ C_c^1 $-mapping: 
	\begin{gather}\label{eq:opr}
	\eu{F}: \fs{D} \times \rr \times  \fs{F} \times \fs{E} \to \fs{C}^1 \\
	\eu{F}(\bl{y},a,f,e) \coloneqq \bl{y}'(s)-a \upphi(t_0+as,\bl{y}(s)+f,e), \quad  s \in \bl{J}. \nonumber
	\end{gather}
	By \eqref{itm:ivp.1}, the mapping 
	\begin{equation}
	\eu{I}\big(\eu{F}(\bl{y},a,f,e) \big) = \eu{I} \big( \bl{y}'(s)-a \upphi(t_0+as,\bl{y}(s)+f,e)\big)
	\end{equation} 
	satisfies the PS-condition at all levels. Differentiating  $\eu{F}$ with respect to $ \bl{y} $ yields  $ \eu{F}_{\bl{y}}(\cdot)\bl{y}=\bl{y}' $. Moreover, for any $ \va \in \fs{C}^1 $, there exists a unique $ \bl{y} \in \fs{D}$,  given by 
	\begin{equation*}
	\bl{y}(s) = \dit_{0}^{s}\va(t) \dt{t}
	\end{equation*}
	such that $ \bl{y}' = \va$. Therefore, the mapping
	$ \eu{F}'_{\bl{y}} : \fs{D} \to \fs{C}^1$ is bijective. Thus, all the assumptions of the implicit function theorem \ref{th:gift2} are fulfilled, implying that for a given $ a>0, e \in \fs{E}, $ and $ f \in \fs{F} $ the functional equation
	\begin{equation}
	\eu{F}(\bl{y},a,f,e) = \zero{\fs{C}}
	\end{equation}
	has a unique solution $ {y} \in \fs{D} $ that solves the IVP \eqref{eq:ivp}. In addition, the mapping
	\begin{gather} \label{gath:mapupvarphi}
	\upvarphi	:\rr \times \fs{F} \times \fs{E} \to \fs{D}, \quad (a,f,e) \mapsto {y}
	\end{gather}
	is of class Keller's $ C_c^1 $.
	
	Next, we prove that the mapping $ \uppsi $ defined by Equation \eqref{map:psi}, is Keller's $ C_c^1 $. 
	Consider the solution $ y $ and $ \vv \coloneqq (t_0-b,t_0+b) \times \fs{F}$, where $ 0< b \leq a $. We will establish the continuity of the partial derivatives $ {y}_t(t,f) $ and $ {y}_f(t,f) $ on
	$\vv$. Subsequently, by \cite[Proposition II.2.6]{ke} it follows  that $ \uppsi $ is a Keller's $ C_c^1 $-mapping.
	
	First, we prove that:
	\begin{quote} \label{quote}
		(1)	\quad $(t,f) \mapsto {y}(t,f)$ is continuous on $ \vv $. 
	\end{quote}
	The continuity of the mapping $ \upvarphi $, as defined in \eqref{gath:mapupvarphi}, implies that for any given $ \epsilon >0 $:
	\begin{equation}
	\mt_{\fs{C}} \big(  {\bl{y}(t_0+\mathtt{t}, f+\mathtt{f}) , \bl{y}(t_0,f)} \big) < \epsilon
	\end{equation}
	if $ \abs{t - \mathtt{t}} $ and $ \mt_{\fs{F}}({f,\mathtt{f}})$ are sufficiently small. 
	This means that
	\begin{equation}
	\forall s \in \bl{J}, \quad	\mt_{\fs{C}_0} \big(
	\bl{y}(s;t_0+\mathtt{t},f+\mathtt{f}), \bl{y}(s;t_0,f)
	\big) < \epsilon.
	\end{equation}
	Thus,  by the definition of  $ \bl{y} $  in \eqref{eq:funcy},  the mapping $ (t,f) \mapsto {y}(t,f) $ is continuous on $ \vv $.
	Now, by employing $ (1) $ and the IVP \eqref{eq:ivp}, we can show that:
	\begin{quote} 
		(2)	\quad $(t,f) \mapsto {y}_t(t,f)$ is continuous on $ \vv $. 
	\end{quote}
	Since $ \upvarphi $ is a Keller's $ C_c^1 $-mapping, the continuity of the partial derivative
	\begin{equation*}
	\mathtt{D}\coloneqq \bl{y}_f : \fs{F} \to \fs{D}
	\end{equation*}
	follows. This, in turn, implies that for any given $ \epsilon >0 $:
	\begin{equation}\label{eq:34}
	\forall e \in \fs{F}, \quad	\mt_{\fs{C}} \big(  {\dd(t_0+\mathtt{t}, f+\mathtt{f})e , \dd(t_0,f)}e \big) < \epsilon
	\end{equation}
	if $ \abs{t - \mathtt{t}} $ and $ \mt_{\fs{F}}({f,\mathtt{f}})$ are sufficiently small. 
	Furthermore, it follow from \eqref{eq:34} that, for $ s \in \bl{J} $
	\begin{equation}\label{eq:last}
	\forall e \in \fs{F}, \quad \mt_{\fs{F}}\left( \bl{y}_f(s;t_0+\mathtt{t}, f + \mathtt{f})e , \bl{y}_f(s;t_0, f)e\right) <\epsilon
	\end{equation}
	if $ \abs{t - \mathtt{t}} $ and $ \mt_{\fs{F}}({f,\mathtt{f}})$ are sufficiently small. Thereby,
	by the definition of  $ \bl{y} $  in \eqref{eq:funcy} and \eqref{eq:last} we have
	\begin{quote}
		(3) \quad $(t,f) \mapsto {y}_f(t,f)$ is continuous on $ \vv $. 
	\end{quote}
\end{proof}
\begin{remark}
	In \cite{vanthera}, the local existence and uniqueness of the solution to the IVP \eqref{eq:ivp} have been established for \fr spaces known as standard. In this context, the data function $ \upphi $ is assumed to be Hadamard differentiable, but this alone does not establish  uniqueness. To prove uniqueness, it is necessarily for   $ \upphi $ to be continuously differentiable and satisfy a specific condition. This result has be modified in \cite{ivan}, by dropping the assumption that the spaces are standard and  assuming that
	the mapping $ \upphi $ is G\^{a}teaux differentiable. However,  in this setting, a unique solution is not guaranteed in general.
	The problem also has been studied in \cite{p}, assuming that spaces exhibit the properties $ (S_{\Omega})_t $ and $ (DN) $, and that mappings are twice continuously differentiable.
	
	In our theorem, \fr spaces are supposed to be arbitrary, and mappings are continuously differentiable. However, the Palais-Smale condition, which plays a central role, may seem restrictive compared to the other results.
\end{remark}

\section{Lagrange Multipliers}
In this section, we prove a Lagrange multiplier theorem and investigate the applicability of the Nehari method in our context. We begin by recalling the definitions of submanifolds, regular points, and infinitesimally surjective maps.

Suppose $ \fs{F}_1 $ is a closed subset of the \fr space $ \fs{F} $ that splits it. Let $ \fs{F}_2 $
be one of its complements, i.e., $ \fs{F} = \fs{F}_1 \oplus \fs{F}_2 $. A subset $ \fs{S} $ of  $ \fs{F} $ is called a \fr submanifold modeled on $ \fs{F}_1 $ of class Keller's $ C_c^r $, $ r\geq 1 $, if for any $ p \in \fs{S} $ there
exists a Keller's $ C^r_c $-diffeomorphism $ \phi : U_{\phi} \to V_{\phi} $, with $ U_{\phi} \ni p \opn  \fs{F}$ and  $ V_{\phi}=W_{\phi} \times O_{\phi} \opn \fs{F}_1 \times \fs{F}_2 =\fs{F} $,
such that
\begin{equation*}
\phi (\fs{S} \cap U_{\phi}) = W_{\phi} \times \set{\zero{\fs{F}_2}}.
\end{equation*} 
Then $\fs{S}$ is a Keller's $C_c^r$-\fr manifold modeled on $\fs{F}_1$, with the maximal Keller's $C_c^r$-atlas including the mappings $\phi|_{{U_{\phi} \cap \fs{S}}}: {U_{\phi} \cap \fs{S}} \to {V_{\phi} \cap \fs{S}}$ for all $\phi$ as described above.

When dealing with submanifolds, it is often more practical to work with tangent maps rather than differentials. To revisit this concept, consider a mapping  $ \va: U \opn \fs{E} \to V \subset \fs{F} $ of class Keller's $ C^{r} $, $ r\geq 1 $. The tangent map of $ \va $ at $ u \in U $ is defined by
\begin{gather*}
T_u\va: TU \to TV, \quad T_u\va(u,e) = (\va(u),\va'(u)e),
\end{gather*}
where $ TU= U \times \fs{E} $ and $ TV= V \times \fs{F} $. Let $ \Pr_{U} $ and $ \Pr_{V} $
be the projections of $ TU $ and $ TV $ onto $ U $  and $V$, respectively. Then, the diagram
\begin{equation*}
\begin{CD}
TU @>T\varphi>> TV\\
@VV\Pr_UV @VV\Pr_{V}V\\
U @>\va>> V
\end{CD}
\end{equation*}
is commutative.
Let   $ \va: \fs{E} \to \fs{F} $ be a Keller's $ C_c^r $-mapping, $ r\geq1 $. We call
$ \va $ infinitesimally surjective at $ e \in \fs{E}$ if the tangent map $ T_{e}\va $ is surjective. 

A point $ f \in \fs{F} $ is called a regular value of $ \va $ if $ \va  $ is infinitesimally surjective  at each $ e \in \va^{-1}(\set{f} )$, and the tangent map  $ T_e\va $ has the split kernel.
If $ T_e\va $ is not surjective, we call $ e \in \fs{E} $ a critical point of $ \va $.
\begin{theorem}\label{th:gst}
	Suppose that a Keller's $ C_c^1 $-mapping $  \va: \fs{E} \to \fs{F}$ is infinitesimally surjective  at $ e_0 $, and $\ker \va'(e_0)$ splits $ \fs{E} $. Let $ \fs{E}_1 $ be the closed complement of 
	$\fs{E}_2 \coloneqq \ker \va'(e_0)$. Suppose that  $ \eu{I} \in \bl{AU}(\fs{F} \oplus \fs{E}_2, [0,\infty]) $, and
	the following condition is satisfied:
	\begin{enumerate}[label={\bf (S)},ref=S]
		\item \label{s.1} for any $ v \in \fs{F}\oplus \fs{E}_2 $, the function 
		\begin{equation*}
		\eu{J}_v: \fs{E} = \fs{E}_1 \oplus \fs{E}_2 \to [0,\infty], \quad \eu{J}_v(u_1,u_2)= \eu{I}\big(v-(\va(u_1,u_2),u_2)\big)
		\end{equation*}
		satisfies the $\mathrm{ PS}$-condition at any level if $ \eu{I} $
		is a Keller's $ C^1_c $ function. Moreover, it satisfies the Chang $\mathrm{ PS}$-condition at any level if  $ \eu{I} $ is locally Lipchitz. 
	\end{enumerate}
	Then, there exists a Keller's $ C_c^1 $-diffeomorphism $ \phi:  \fs{F} \oplus \fs{E}_2 \to \fs{E}
	$ such that the following diagram commutes:
	\begin{equation*}
	\xymatrix{
		\mathbb{F}\oplus \mathbb{E}_2 \ar[r]^{\phi} \ar[dr]_{\mathrm{Pr}_{\fs{F}}} &  \mathbb{E} \ar[d]^{\varphi} \\
		& \fs{F}
	}
	\end{equation*}	
	Here, $ \mathrm{Pr}_{\fs{F}} $ is the projection onto $ \fs{F} $.  Furthermore, the restriction of $\dd \phi(f,e)$ to
	$ \fs{F} \times \set{\zero{\fs{E}}} $ is an isomorphism for all $ (f,e)\in \fs{F}\oplus \fs{E}_2 $.
\end{theorem}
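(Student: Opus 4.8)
The plan is to \emph{straighten} $\va$ along its kernel and reduce the whole statement to the global inversion theorem, Theorem~\ref{th:inv}. First I would introduce the auxiliary mapping
\begin{equation*}
\Psi : \fs{E} = \fs{E}_1 \oplus \fs{E}_2 \to \fs{F} \oplus \fs{E}_2, \qquad \Psi(u_1,u_2) \coloneqq \big(\va(u_1,u_2),\, u_2\big),
\end{equation*}
and set $\phi \coloneqq \Psi^{-1}$. Since $\fs{F}\oplus\fs{E}_2$ is again a \fr space, $\va$ is Keller's $C_c^1$, and the second component $(u_1,u_2)\mapsto u_2$ is the continuous linear projection $\mathrm{Pr}_{\fs{E}_2}$ of $\fs{E}$ onto $\fs{E}_2$ along $\fs{E}_1$, the pair $\Psi=(\va,\mathrm{Pr}_{\fs{E}_2})$ is Keller's $C_c^1$, being a mapping into a product whose components are Keller's $C_c^1$. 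By construction $\mathrm{Pr}_{\fs{F}}\circ\Psi=\va$, so once $\Psi$ is known to be a bijection the map $\phi$ makes the required triangle commute: $\va\circ\phi=\mathrm{Pr}_{\fs{F}}\circ\Psi\circ\Psi^{-1}=\mathrm{Pr}_{\fs{F}}$.

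Hence the theorem reduces to showing that $\Psi$ is a \emph{global} Keller's $C_c^1$-diffeomorphism, which is exactly the conclusion of Theorem~\ref{th:inv} applied to $\Psi$ in place of $\va$, with target $\fs{F}\oplus\fs{E}_2$ and the given $\eu{I}\in\bl{AU}(\fs{F}\oplus\fs{E}_2,[0,\infty])$, provided its two hypotheses are verified. Hypothesis~\eqref{ift.1} is verbatim condition~\eqref{s.1}: for $v\in\fs{F}\oplus\fs{E}_2$ the associated functional is $\eu{J}_v(e)=\eu{I}(v-\Psi(e))=\eu{I}\big(v-(\va(u_1,u_2),u_2)\big)$, which by~\eqref{s.1} satisfies the $\mathrm{PS}$-condition, respectively the Chang $\mathrm{PS}$-condition, at every level according to the nature of $\eu{I}$. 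So the work is concentrated in hypothesis~\eqref{ift.2}: $\Psi'(e)$ must be bijective for every $e\in\fs{E}$.

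To handle~\eqref{ift.2} I would differentiate componentwise: for $(h_1,h_2)\in\fs{E}_1\oplus\fs{E}_2$ one has $\Psi'(e)(h_1,h_2)=\big(\va'(e)(h_1,h_2),\,h_2\big)$, and, recording the second component literally as $h_2$, this is bijective precisely when the restriction $\va'(e)|_{\fs{E}_1}:\fs{E}_1\to\fs{F}$, $h_1\mapsto\va'(e)(h_1,\zero{\fs{E}_2})$, is a topological isomorphism — and, by the open mapping theorem for \fr spaces, a continuous linear bijection already suffices. At $e=e_0$ this holds: $\fs{E}_2=\ker\va'(e_0)$ together with $\fs{E}=\fs{E}_1\oplus\fs{E}_2$ makes $\va'(e_0)|_{\fs{E}_1}$ injective, and infinitesimal surjectivity of $\va$ at $e_0$ (which gives $\va'(e_0)(\fs{E})=\fs{F}$, hence $\va'(e_0)(\fs{E}_1)=\fs{F}$) makes it onto $\fs{F}$. \textbf{The main obstacle} is to propagate this from $e_0$ to all of $\fs{E}$: one argues that $\{\,e\in\fs{E}\mid \va'(e)|_{\fs{E}_1}\text{ is an isomorphism}\,\}$ is nonempty, open and closed — using continuity of $e\mapsto\va'(e)$ into $\lc(\fs{E},\fs{F})$ and the stability of invertibility under small perturbations — or else one reads the infinitesimal-surjectivity hypothesis (with the fixed complement $\fs{E}_1$) as a standing assumption at every point. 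Since in \fr spaces, in contrast to Banach spaces, surjectivity of continuous linear maps is not robust under perturbation, this is exactly the delicate part of the argument.

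With~\eqref{ift.1} and~\eqref{ift.2} in hand, Theorem~\ref{th:inv} yields that $\Psi$ is a global Keller's $C_c^1$-diffeomorphism and $\dd\Psi^{-1}(v)=[\dd\Psi(\Psi^{-1}(v))]^{-1}$; setting $\phi=\Psi^{-1}$ completes the commuting diagram as above. For the last assertion I would write $v=(f,e)$, put $e'=\phi(f,e)$, and use $\dd\phi(f,e)=[\Psi'(e')]^{-1}$ together with the block description of $\Psi'(e')$: solving $\va'(e')(h_1,\zero{\fs{E}_2})=w$ for the unique $h_1\in\fs{E}_1$ shows that $[\Psi'(e')]^{-1}$ carries $\fs{F}\times\{\zero{\fs{E}_2}\}$ isomorphically onto $\fs{E}_1$, which is precisely the stated isomorphism property of $\dd\phi(f,e)$.
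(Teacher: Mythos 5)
Your proposal takes essentially the same route as the paper: the paper defines $\upvarphi(u)=(\va(u_1,u_2),u_2)$ (your $\Psi$), verifies condition \eqref{s.1} gives hypothesis \eqref{ift.1}, checks invertibility of the block-triangular derivative, invokes Theorem \ref{th:inv}, sets $\phi=\upvarphi^{-1}$, and reads off the commuting triangle and the isomorphism property of $\dd\phi|_{\fs{F}\times\set{\zero{\fs{E}}}}$ from the block structure — exactly as you do.

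The ``main obstacle'' you flag is genuine, and it is worth recording that the paper's own proof does not dispose of it either: the paper verifies that $\dd\upvarphi(e_0)$ is a continuous linear isomorphism only at the single point $e_0$ (using $\dd_2\va(e_0)=\zero{\fs{F}}$ and the open mapping theorem for $\va'(e_0)|_{\fs{E}_1}$), and then asserts that all hypotheses of Theorem \ref{th:inv} hold, even though \eqref{ift.2} demands bijectivity of $\dd\upvarphi(e)$ for \emph{every} $e\in\fs{E}$; later it even uses ``$\dd\upvarphi(e)$ is an isomorphism for all $e$'' as a consequence, which is circular. Your two suggested repairs (an open--closed continuation argument, which is delicate because invertibility is not an open condition in $\lc(\fs{E},\fs{F})$ for general \fr spaces, or strengthening the hypothesis to pointwise invertibility of $\va'(e)|_{\fs{E}_1}$ everywhere) are the natural ways to close this; the second is what the theorem implicitly needs. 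So your proposal matches the intended proof, and your explicit identification of the weak step is an improvement on, not a deviation from, the paper's argument.
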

\begin{proof}
	Again, we assume that $ \eu{I} $ is a Keller's $ C^1_c $-mapping. 
	By the open mapping theorem, $ \va'(e_0)|_{\fs{E}_1} : \fs{E}_1 \to \fs{F}$ is continuous linear isomorphism.
	Define the mapping:
	\begin{equation*}
	\upvarphi : \fs{E} \to \fs{F}\oplus \fs{E}_2, \quad \upvarphi(u)=(\va(u_1,u_2),u_2),\, \text{where}\, u= (u_1,u_2) \in \fs{E}_1 \oplus \fs{E}_2.
	\end{equation*}
	Hence,
	\begin{equation*}
	\dd \upvarphi (u)(e_1,e_2)=
	\stackrel{\mbox{$A$}}{
		\begin{bmatrix}
		\dd_1 \va(u) & \dd_2 \va(u)  \\
		\zero{\fs{F}} & \Id_{\fs{E}_2} 
		\end{bmatrix}}
	\begin{bmatrix}
	e_1  \\
	e_2 
	\end{bmatrix}, \quad \text{for all}\,  u=(u_1,u_2) \in \fs{E},\,e_1 \in\fs{E}_1, \,e_2 \in\fs{E}_2.
	\end{equation*}
	However, $ \dd_2 \va(e_0) = \dd\va(e_0)|_{\fs{E}_2} = \zero{\fs{F}} $, since $ \fs{E}_2 = \ker \dd \va(e_0) $.
	Consequently, $ A $ is block diagonal, and thus $ \dd \upvarphi(e_0): \fs{E} \to \fs{F} \oplus \fs{E}_2 $ is a continuous linear isomorphism. By \eqref{s.1}, for any $ v \in \fs{F}\oplus \fs{E}_2 $, the function
	\begin{equation*}
	\ \eu{J}_v((u_1,u_2)) = \eu{I}(v-\upvarphi(u_1,u_2))
	\end{equation*}
	satisfies the PS condition at any level. Therefore, all the hypothesis of the global inversion theorem \eqref{th:inv} are satisfied for $ \upvarphi $, and hence, $ \upvarphi : \fs{E} \to  \fs{F}\oplus \fs{E}_2$ is a diffeomorphism of class 
	Keller's $ C^1_c $. Let $ \phi \coloneqq \upvarphi^{-1} $, which is also a Keller's $ C^1_c $-diffeomorphism. Then
	\begin{equation*}
	\forall (f,e)\in \fs{F}\oplus \fs{E}_2, \quad	(f,e)=(\upvarphi\circ\phi)(f,e)=(\va(\phi(f,e)), e).
	\end{equation*}
	Thus, $ \upvarphi\circ\phi(f,e) = f = \mathrm{Pr}_{\fs{F}}(f,e) $. Furthermore, the mapping $ \dd \phi (\va(e_0),e_{02})$, where $ e_0=(e_{01},e_{02}) $, exhibits a block diagonal structure, since $ \dd \upvarphi(e_0) $ is block diagonal.
	This, along with the fact that $ \dd \upvarphi(e) $ is an isomorphism for all $ e \in \fs{E} $, implies that $ \dd_1 \va(e): \fs{E}_1 \to \fs{E} $ is also an isomorphism. This means that 
	$ \dd \upvarphi(e)|_{\fs{E}_1 \times \set{\zero{\fs{E}}}}: \fs{E}_1 \times \set{\zero{\fs{E}}} \to \fs{F} \times \set{\zero{\fs{E}_2}}  $ is an isomorphism. Therefore, 
	\begin{equation*}
	\dd \phi((f,e))|_{\fs{F} \times \set{\zero{\fs{E}}}}: \fs{F} \times \set{\zero{\fs{E}}} \to \fs{E}_1 \times \set{\zero{\fs{E}_2}} 
	\end{equation*}
	is an isomorphism for all $ (f,e)\in \fs{F}\oplus \fs{E}_2 $.
\end{proof}
\begin{corollary}\label{cor:1}
	Assume that $ y \in \fs{F} $ is a regular value of $ \va $, and let $ \fs{S}=\va^{-1}(y) $. If $ \va $ satisfies condition \eqref{s.1}  at some point of $ \fs{S} $, then the preimage $ \fs{S}=\va^{-1}(y) $ is a  Keller's $ C^1_c $-\fr submanifold. The tangent space at any point $ x \in \fs{S} $ is given by $ T_x\fs{S} = \ker \va'(x)$.
\end{corollary}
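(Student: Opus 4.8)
The plan is to manufacture a single \emph{global} submanifold chart for $\fs{S}$ out of the diffeomorphism provided by Theorem~\ref{th:gst}. First I would fix a point $x_0\in\fs{S}$ at which $\va$ satisfies condition~\eqref{s.1}. Since $y$ is a regular value of $\va$ and $x_0\in\va^{-1}(y)$, the map $\va$ is infinitesimally surjective at $x_0$ — equivalently $\va'(x_0)\colon\fs{E}\to\fs{F}$ is onto — and $\fs{E}_2\coloneqq\ker\va'(x_0)$, which is closed as the kernel of $\va'(x_0)\in\lc(\fs{E},\fs{F})$, splits $\fs{E}$; pick a closed complement $\fs{E}_1$, so $\fs{E}=\fs{E}_1\oplus\fs{E}_2$. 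These are precisely the standing hypotheses of Theorem~\ref{th:gst}, which together with~\eqref{s.1} at $x_0$ yields a Keller's $C_c^1$-diffeomorphism
\[
\upvarphi\colon\fs{E}\to\fs{F}\oplus\fs{E}_2,\qquad \upvarphi(u)=(\va(u),u_2),\quad u=(u_1,u_2)\in\fs{E}_1\oplus\fs{E}_2 .
\]

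Next I would determine $\upvarphi(\fs{S})$. Because $\upvarphi$ is bijective, $\upvarphi(\fs{S})=\upvarphi(\va^{-1}(y))=\set{y}\times\fs{E}_2$: indeed $\upvarphi(u)=(y,u_2)$ for $u\in\fs{S}$, and conversely each $(y,w)\in\set{y}\times\fs{E}_2$ has a unique $\upvarphi$-preimage, which necessarily lies in $\fs{S}$. To line this up with the paper's definition of a \fr submanifold — whose chart must take values in the product of two complements of the ambient space $\fs{E}$ — I would transport the $\fs{F}$-factor back into $\fs{E}_1$ by post-composing $\upvarphi$ with the affine isomorphism $(w,v)\mapsto\bigl(v,\,L^{-1}(w-y)\bigr)$, where $L\coloneqq\va'(x_0)|_{\fs{E}_1}\colon\fs{E}_1\to\fs{F}$ is the continuous linear isomorphism furnished (via the open mapping theorem) in the proof of Theorem~\ref{th:gst}. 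This gives a Keller's $C_c^1$-diffeomorphism
\[
\phi\colon\fs{E}\to\fs{E}_2\times\fs{E}_1,\qquad \phi(u)=\bigl(u_2,\,L^{-1}(\va(u)-y)\bigr),
\]
satisfying $\phi(\fs{S})=\fs{E}_2\times\set{\zero{\fs{E}_1}}$. With $U_\phi=\fs{E}$, $W_\phi=\fs{E}_2$, $O_\phi=\fs{E}_1$ this is a global submanifold chart, so $\fs{S}$ is a Keller's $C_c^1$-\fr submanifold modeled on $\fs{E}_2=\ker\va'(x_0)$ and the chart requirement holds at every point of $\fs{S}$.

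Finally I would read the tangent space off this chart: for $x\in\fs{S}$ one has $T_x\fs{S}=(\phi'(x))^{-1}\bigl(\fs{E}_2\times\set{\zero{\fs{E}_1}}\bigr)$, and since $\phi'(x)(e)=\bigl(e_2,\,L^{-1}\va'(x)e\bigr)$ for $e=e_1+e_2$, the condition $\phi'(x)(e)\in\fs{E}_2\times\set{\zero{\fs{E}_1}}$ is equivalent to $\va'(x)e=0$; hence $T_x\fs{S}=\ker\va'(x)$ for every $x\in\fs{S}$. I do not anticipate a genuine obstacle here: the whole force of the statement is carried by Theorem~\ref{th:gst}, and the one point that needs care is compatibility with the submanifold definition, which is exactly what requires the cosmetic transport by $L^{-1}$ (and the verification that $\fs{E}_2$ is closed and split, which the regular-value hypothesis guarantees).
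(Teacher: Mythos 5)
Your proposal is correct and follows essentially the same route as the paper: both arguments rest entirely on the straightening diffeomorphism supplied by Theorem~\ref{th:gst}, applied at the point of $\fs{S}$ where condition~\eqref{s.1} holds. The only (harmless) differences are that you build one explicit global chart into $\fs{E}_2\times\fs{E}_1$ and read $T_x\fs{S}=\ker\va'(x)$ directly off its derivative, whereas the paper phrases the straightening locally and verifies the tangent-space identity with a curve argument; your version is, if anything, the more careful of the two.
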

\begin{proof}
	We consider $ \fs{S} $  in a local neighborhood around a given point $ x $. By Theorem \ref{th:gst}, there exists a Keller's
	$ C^1_c $-diffeomorphism $ \phi $ such that
	\begin{equation*}
	\forall h \in \fs{E}, \quad \va \circ \phi^{-1} (h)= \va'(x)h+y.
	\end{equation*} 
	Consequently, the solution set of the equation
	\begin{equation*}
	\va(z)=y
	\end{equation*}
	in a neighborhood of $ z=x $ corresponds to the solution set of the equation 
	\begin{equation*}
	\va'(x)h=\zero{\fs{F}}
	\end{equation*}
	in a neighborhood of $ \zero{\fs{F}} $. Therefore, locally, $ \fs{S} $ looks like $\ker \va'(x)$, which splits in $\fs{E}$.
	
	To determine the tangent space $ T_x\fs{S} $, consider a Keller's $ C^1_c $-curve $ c(t) $ in $ \fs{S} $ with $ c(0)=x $. Since $ \va (c(t)) = \zero{\fs{F}} $, it follows that $ \va'(x)c'(0)=\zero{\fs{F}} $. Therefore, $ T_x\fs{S} \subset \ker \va'(x) $.
	
	Conversely, suppose $ \va'(x)w = \zero{\fs{F}} $, and let $ x(t) = \phi^{-1}(tw) $. Then, $ x(t) \subset \fs{S} $, and $ x'(0)=w $. Hence, $ T_x\fs{S} = \ker \va'(x) $.
\end{proof}
Moving forward, we define transversality. Let $ \fs{S} $ be a Keller's $ C_c^1 $-submanifold of $\fs{F}$ modeled on $ \fs{F}_1 $, where $ \fs{F}_1 $ is a complemented closed subset of $ \fs{F} $, and $ \fs{F}_2 $ is one of its complements, i.e., $ \fs{F} = \fs{F}_1 \oplus \fs{F}_2 $.
Let $ \va : \fs{E} \to \fs{F} $ be a Keller's $ C^1_c $-mapping. We say $ \va $ is transversal to $ \fs{S} $ and denote it as $ \va \pitchfork \fs{S} $ if the following condition is met:
\begin{enumerate}[label={\bf (T)},ref=T]
	\item \label{c:t}	consider $ x \in \fs{E} $ such that $ \va(x) \in \fs{S} $, and let $ (U,\phi) $ be a chart at
	$ \va(x) $ such that $ \phi: U \to U_1 \times U_2 $ is an isomorphism, satisfying
	\begin{equation*}
	\phi (\va(x)) =(\zero{\fs{F}_1},\zero{\fs{F}_2}) \quad \text{and} \quad \phi (\fs{S} \cap U) = U_1 \times \zero{\fs{F}_2}.
	\end{equation*}
	Then, there exist an open neighborhood $ \bl{U} $ of $ x $ such that the composite map
	\begin{equation*}
	\bl{U} \xlongrightarrow{\va} U \xlongrightarrow{\phi} U_1 \times U_2 \xlongrightarrow{\Pr_{2}} U_2
	\end{equation*}
	is a infinitesimally surjective, and  $\ker \va'(x)$ splits $ \fs{E} $. 
\end{enumerate}
With the above notations, we proceed to prove the following result:
\begin{corollary} \label{cor:2}
	If $ \va \pitchfork \fs{S} $ and condition \eqref{s.1} is fulfilled at some point $ s \in \va^{-1}(\fs{S}) $, then $ \va^{-1}(\fs{S}) $ is  a Keller's $ C_c^1 $-\fr submanifold of $ \fs{E} $.
\end{corollary}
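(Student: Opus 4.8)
The plan is to follow the Banach-space template for preimages under transversal maps, reducing the statement to the preimage theorem (Corollary \ref{cor:1}) applied to a local representative of $\va$ read through a submanifold chart of $\fs{S}$.

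Fix the point $s \in \va^{-1}(\fs{S})$ at which condition \eqref{s.1} holds and put $x := s$. Since $\fs{S}$ is a Keller's $C^1_c$-submanifold modeled on $\fs{F}_1$, choose a chart $(U,\phi)$ at $\va(x)$ with $\phi : U \to U_1 \times U_2 \opn \fs{F}_1 \times \fs{F}_2$ an isomorphism onto its image, $\phi(\va(x)) = (\zero{\fs{F}_1},\zero{\fs{F}_2})$, and $\phi(\fs{S} \cap U) = U_1 \times \set{\zero{\fs{F}_2}}$. By $\va \pitchfork \fs{S}$, i.e.\ condition \eqref{c:t}, there is an open neighborhood $\bl{U} \ni x$ on which
\[
g := \Pr_{2} \circ \phi \circ \va : \bl{U} \longrightarrow U_2 \opn \fs{F}_2
\]
is infinitesimally surjective and $\ker \va'(x)$ splits $\fs{E}$. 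A short computation with the chain rule gives $\ker g'(x) = \va'(x)^{-1}\big(T_{\va(x)}\fs{S}\big)$ (using $\phi(\fs{S}\cap U)=U_1\times\set{\zero{\fs{F}_2}}$, so that $\ker\big(\Pr_{2}\circ\phi'(\va(x))\big) = T_{\va(x)}\fs{S}$), and transversality together with the splitting of $\ker\va'(x)$ makes $\ker g'(x)$ split $\fs{E}$ as well. Because $\phi$ straightens $\fs{S}$, one has the elementary identity
\[
\va^{-1}(\fs{S}) \cap \bl{U} \;=\; g^{-1}(\zero{\fs{F}_2}),
\]
so near $x$ the set $\va^{-1}(\fs{S})$ is exactly a level set of the Keller's $C^1_c$-mapping $g$. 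It then suffices to prove that $g^{-1}(\zero{\fs{F}_2})$ is a Keller's $C^1_c$-submanifold of $\fs{E}$: I would invoke Corollary \ref{cor:1} for the map $g$ and the value $\zero{\fs{F}_2}$, using that condition \eqref{c:t} makes $\zero{\fs{F}_2}$ a regular value of $g$ (infinitesimal surjectivity on $\bl{U}$, split kernel as above) and that condition \eqref{s.1}, now read for $g$ and the splitting $\fs{E} = \fs{E}_1 \oplus \ker g'(s)$ with $\eu{I} \in \bl{AU}(\fs{F}_2 \oplus \ker g'(s),[0,\infty])$, supplies the Palais-Smale (respectively Chang Palais-Smale) input that Corollary \ref{cor:1}, Theorem \ref{th:gst} and Theorem \ref{th:inv} require. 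Transporting the conclusion back through the identity above and using that being a submanifold is a local property, $\va^{-1}(\fs{S})$ acquires a submanifold chart near $x$; running the same construction with the transversality chart at each point of $\va^{-1}(\fs{S})$ assembles these into a Keller's $C^1_c$-atlas, modeled locally on $\ker \va'(\cdot)$.

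The step I expect to be the main obstacle is checking that the local representative $g = \Pr_{2}\circ\phi\circ\va$ genuinely inherits the compactness hypothesis needed to apply Corollary \ref{cor:1}, i.e.\ that condition \eqref{s.1} for $\va$ at $s$ yields condition \eqref{s.1} for $g$ at $s$; in \fr spaces, unlike the Banach case, even the local fact that an infinitesimally surjective map with split kernel is locally a projection is unavailable without such a Palais-Smale-type condition, so this point cannot be circumvented. A secondary, bookkeeping point is that $g$ is defined only on the open set $\bl{U}$ rather than on all of $\fs{E}$, so Corollary \ref{cor:1} must be applied inside a chart and combined with the locality of the submanifold property, exactly as in the proof of Corollary \ref{cor:1} itself; the remaining verifications — that $g$ is Keller's $C^1_c$, the identity $\va^{-1}(\fs{S}) \cap \bl{U} = g^{-1}(\zero{\fs{F}_2})$, the passage from $\ker \va'$ to $\ker g'$, and the chart compatibility — are routine.
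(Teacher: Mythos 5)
Your proposal follows essentially the same route as the paper: form the local representative $\Pr_{2}\circ\phi\circ\va|_{\bl{U}}$, use transversality to show $\zero{\fs{F}_2}$ is a regular value with kernel $(T_x\va)^{-1}(T_{\va(x)}\fs{S})$ splitting $\fs{E}$, identify $\va^{-1}(\fs{S}\cap\bl{U})$ with the preimage of $\zero{\fs{F}_2}$, and invoke Corollary \ref{cor:1}. The two caveats you flag (transferring condition \eqref{s.1} to the composite map, and the composite being defined only on $\bl{U}$) are left equally implicit in the paper's own proof, so they are not gaps relative to it.
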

\begin{proof}
	The transversality condition \eqref{c:t} implies that for $ x \in \bl{U} \cap \va^{-1}(\fs{S}) $, we have
	\begin{equation*}
	T_x ( \mathrm{Pr}_2 \circ \phi \circ \va|_{\bl{U}} ) =  \mathrm{Pr}_2 \circ T_{\va(x)} \circ T_x\va ,
	\end{equation*}
	and $ T_x (\phi \circ \va)(\fs{E}) + \fs{F}_1  = \fs{F}_1 \oplus \fs{F}_2$.  Therefore, $ T_x ( \mathrm{Pr}_2 \circ \phi \circ \va|_{\bl{U}} ): \fs{E} \to \fs{F}_2 $ is surjective, and its kernel is $ (T_x\va)^{-1}(T_{\va(x)}\fs{S}) $. This is due to the fact that
	$ \ker \Pr_{2} = \fs{F}_1 $, and we have
	\begin{equation*}
	(T_{\va(x)}\phi)^{-1}(\fs{F}_1) = T_{\va(x)}\fs{S}.
	\end{equation*}
	Consequently, it splits in $ \fs{E} $, implying that
	$ \zero{\fs{F}_2} $ is a regular value of the composition $$ 	(\mathrm{Pr}_2 \circ \phi \circ  \va|_{\bl{U}}): \bl{U}  \to \fs{F}_2.$$
	Furthermore, 
	\begin{equation*}
	( \mathrm{Pr}_2 \circ \phi \circ  \va|_{\bl{U}} )^{-1} (\zero{\fs{F}_2})= \va^{-1} (\fs{S} \cap \bl{U}).
	\end{equation*}
	Therefore, by
	Corollary \ref{cor:1},
	\begin{equation*}
	\va^{-1} (\fs{S} \cap U)
	\end{equation*}
	is a Keller's $ C_c^1 $-\fr submanifold of $ \fs{E} $. Its tangent space at
	$ x \in \bl{U} $ equals 
	\begin{equation*}
	\ker \big(T_x (\mathrm{Pr}_{2} \circ \phi \circ \va|_{\bl{U}}) \big) = (T_x \va )^{-1}(T_{\va(x)}\fs{S}).
	\end{equation*}
\end{proof}
Next, we will prove a Lagrange multiplier theorem. 
Let $ \phi: \fs{E} \to \rr $ be a Keller's $ C^1_c $-mapping and $ \bl{S} $ a submanifold of $ \fs{E} $.
A point $ s \in \bl{S} $ is a critical point of $ \phi|_{\bl{S}} $ if and only if $ \langle \phi'(s),v \rangle = 0 $ for all
$ v \in T_s\bl{S} $. By definition of a tangent space, this means that for every Keller's $ C^1_c $-mapping $ \gamma : (-\epsilon,\epsilon) \to \fs{E} $ such that $ \gamma(t) \in \bl{S} $ for all $ t \in (\-\epsilon,\epsilon) $, $ \gamma(0)=s $, and $ \gamma'(0)  $ exists, we have
\begin{equation*}
\derat0  \va(\gamma (t)) = 0.
\end{equation*}
If $ s \in \text{Int}\, \bl{S} $, then $ s $ is a usual critical point of $ \phi $, which is called a free critical point of $ \phi $.

Let $\eu{I}: \bl{S} \hookrightarrow \fs{E}  $ be the canonical inclusion so that $ \phi|_{\bl{S}} = \phi \circ \eu{I} $. Then the chain rule implies that $ T_s(\phi|_{\bl{S}}) = T_s \phi \circ T_s \eu{I}$, thereby
$ s $ is a critical point of $ \phi|_{\bl{S}} $ if and only if $ T_s\phi|_{T_s\bl{S}}=0 $.

Let $  \va: \fs{E} \to \fs{F}$ be a Keller's $ C_c^1 $-mapping, $ y $ be a regular value of $ \va $, and $\fs{S}=\va^{-1}(y)$. Therefore, $ \va $ is infinitesimally surjective  at $ y $, with  $ \fs{E}_2 \coloneqq \ker \va'(y)$ that splits $ \fs{E} $ such that $ \fs{E}_2 \oplus \fs{E}_1 = \fs{E} $.  Suppose that there exist  $ \eu{I} \in \bl{AU}(\fs{F} \oplus \fs{E}_2, [0,\infty]) $  such that condition \eqref{s.1} is satisfied. 
Thus, by Corollary \ref{cor:1}, $\va^{-1}(y)$ is a Keller's $ C^1_c $-submanifold of $ \fs{E} $. 

With the above notations, in the following theorem for a real-valued mapping $ \upphi $ on $\fs{E}$, we give necessary and sufficient conditions for a point in $ \fs{S} $ to be a critical point of $ \upphi|_{\fs{S}} $.
\begin{theorem}\label{th:lm}
	Let $ \upphi: \fs{E} \to \rr $ be a Keller's $ C^1_c $-mapping satisfying condition \eqref{s.1} at some point of $ \fs{S} $. A point $ s \in \fs{S} $ is a critical point of
	$ \upphi|_{\fs{S}} $ if and only if there exist $ \mu \in \fs{F}' $ such that $ s $ is a critical point of
	$ \upphi - \mu \circ \va $. If $ \va $ is surjective, then $ 
	\mu $ is unique.
\end{theorem}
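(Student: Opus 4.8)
The plan is to reduce the constrained critical point condition on $\fs{S}$ to a free one by means of the diffeomorphism produced by Theorem \ref{th:gst}. First I would set up the splitting $\fs{E}=\fs{E}_1\oplus\fs{E}_2$ with $\fs{E}_2=\ker\va'(s)$ (using $s\in\fs{S}$, a regular value), and invoke Theorem \ref{th:gst} to obtain a Keller's $C^1_c$-diffeomorphism $\phi:\fs{F}\oplus\fs{E}_2\to\fs{E}$ with $\va\circ\phi=\mathrm{Pr}_{\fs{F}}(\cdot)+y$ (the affine normalization as in Corollary \ref{cor:1}), so that $\phi^{-1}(\fs{S})=\set{y}\times\fs{E}_2$ locally and the tangent space is $T_s\fs{S}=\ker\va'(s)=\fs{E}_2$. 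The key observation is that in these coordinates $\upphi|_{\fs{S}}$ becomes $u_2\mapsto(\upphi\circ\phi)(y,u_2)$, so $s$ is a critical point of $\upphi|_{\fs{S}}$ iff $\mathtt{D}_2(\upphi\circ\phi)(y,e_2)=0$ where $\phi(y,e_2)=s$, i.e. iff $\langle\upphi'(s),\,\dd\phi(y,e_2)(\zero{\fs{F}},w)\rangle=0$ for all $w\in\fs{E}_2$. Since by Theorem \ref{th:gst} the restriction $\dd\phi(y,e_2)|_{\set{\zero{\fs{F}}}\times\fs{E}_2}$ maps onto $\set{\zero{\fs{F}}}\times\fs{E}_2=\fs{E}_2$ (the complement of $\fs{E}_1$), this says exactly $\langle\upphi'(s),v\rangle=0$ for all $v\in\fs{E}_2=T_s\fs{S}$, recovering the intrinsic condition recalled before the theorem via the chain rule $T_s(\upphi|_{\fs{S}})=T_s\upphi\circ T_s\eu{I}$.

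Next I would produce the multiplier. Granting $s$ is a critical point of $\upphi|_{\fs{S}}$, we know $\upphi'(s)$ annihilates $\fs{E}_2=\ker\va'(s)$. By Theorem \ref{th:gst} (open mapping theorem) $\va'(s)|_{\fs{E}_1}:\fs{E}_1\to\fs{F}$ is a continuous linear isomorphism; define $\mu\coloneqq\upphi'(s)\circ\big(\va'(s)|_{\fs{E}_1}\big)^{-1}\in\fs{F}'$, composed with the projection $\Pr_{\fs{E}_1}:\fs{E}\to\fs{E}_1$ along $\fs{E}_2$. Then for $e_1\in\fs{E}_1$ one checks $\langle\mu\circ\va'(s),e_1\rangle=\langle\upphi'(s),e_1\rangle$, while on $\fs{E}_2$ both sides vanish ($\mu\circ\va'(s)$ kills $\fs{E}_2$ since $\va'(s)$ does, and $\upphi'(s)$ kills $\fs{E}_2$ by the criticality), so $(\upphi-\mu\circ\va)'(s)=\upphi'(s)-\mu\circ\va'(s)=0$ on all of $\fs{E}$; that is, $s$ is a free critical point of $\upphi-\mu\circ\va$. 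Conversely, if such a $\mu$ exists with $(\upphi-\mu\circ\va)'(s)=0$, then for any $v\in T_s\fs{S}=\ker\va'(s)$ we get $\langle\upphi'(s),v\rangle=\langle\mu\circ\va'(s),v\rangle=\langle\mu,\va'(s)v\rangle=0$, so $s$ is a critical point of $\upphi|_{\fs{S}}$.

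Finally, uniqueness of $\mu$ when $\va$ is surjective: if $\mu_1,\mu_2\in\fs{F}'$ both satisfy $\upphi'(s)=\mu_i\circ\va'(s)$, then $(\mu_1-\mu_2)\circ\va'(s)=0$ on $\fs{E}$; but $\va'(s)$ has image dense in (indeed equal to) $\fs{F}$ — here I would use surjectivity of $\va$ together with infinitesimal surjectivity at the regular value to ensure $\va'(s)(\fs{E})=\fs{F}$, or at least is dense — hence $\mu_1-\mu_2=0$ on $\fs{F}$. I expect the main obstacle to be bookkeeping about continuity of the constructed functional $\mu$ (it is automatic, being a composition of continuous linear maps, once $\big(\va'(s)|_{\fs{E}_1}\big)^{-1}$ is known continuous, which is exactly the open mapping theorem input already used in Theorem \ref{th:gst}) and making the ``coordinates'' argument for the first equivalence rigorous without a local submanifold chart beyond what Corollary \ref{cor:1} already supplies; the algebra of the multiplier itself is routine.
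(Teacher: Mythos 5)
Your proposal is correct, and the sufficiency direction and the overall strategy (split $\fs{E}=\fs{E}_1\oplus\fs{E}_2$ with $\fs{E}_2=\ker\va'(s)=T_s\fs{S}$, use that $\va'(s)|_{\fs{E}_1}:\fs{E}_1\to\fs{F}$ is a topological isomorphism) coincide with the paper's. Where you diverge is the necessity direction: the paper passes through the normal-form charts supplied by Theorem \ref{th:gst}, defines $\upmu$ as the partial derivative $\dd_1\upphi(\phi^{-1}(\zero{},\zero{}))$ in those coordinates, and transports it back via $\psi'(y)$; you instead write the multiplier down explicitly as $\mu=\upphi'(s)\circ\bigl(\va'(s)|_{\fs{E}_1}\bigr)^{-1}$ and verify $\upphi'(s)=\mu\circ\va'(s)$ separately on $\fs{E}_1$ and on $\fs{E}_2$. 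The two are equivalent in content (the chart derivative at the base point is built from the same restricted inverse), but your version is more elementary: it needs only Corollary \ref{cor:1} (to identify $T_s\fs{S}$ with $\ker\va'(s)$) and the open mapping theorem, not the full chart computation, and it makes the continuity of $\mu$ transparent. Your first paragraph (the coordinate characterization of criticality of $\upphi|_{\fs{S}}$) is actually dispensable, since the paper takes $\langle\upphi'(s),v\rangle=0$ for all $v\in T_s\fs{S}$ as the definition. Finally, your uniqueness argument is cleaner than the paper's: the paper tests the identity $T_s\upphi=\mu\circ T_s\va$ against $\langle\mu,\va(e)\rangle$ rather than $\langle\mu,\va'(s)e\rangle$, which is a slip, whereas you correctly reduce uniqueness to surjectivity (or dense range) of $\va'(s)$ --- and, as you note, this already holds at any point of $\fs{S}$ because $y$ is a regular value, so the hypothesis that $\va$ itself be surjective is not really needed.
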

\begin{proof}
	{\bf Sufficiency.} Suppose that such a $ \mu $ exists, and $ s $ is a critical point of  
	$ \upphi - \mu \circ \va $. In terms of tangent maps we have $ T_s\upphi = \mu \circ T_s \va $ and
	$ T_s \fs{S} = \ker T_s{\va} $. Therefore, if $ \eu{I}: \fs{S} \hookrightarrow \fs{E} $ is the canonical inclusion, then 
	\begin{equation*}
	\forall v \in T_s\fs{S}, \quad (\mu \circ T_s \va \circ T_s \eu{I})(v) = \mu (T_s \va(v)) =0,
	\end{equation*}
	implying that $ 0 = (\mu \circ T_s \va) |_{T_n\fs{S}} = T_s\upphi|_{T_s\fs{S}} $.
	
	{\bf Necessity.} Suppose that $ s  $ is a critical point of $ \upphi|_{\fs{S}} $, that is $ T_s\upphi|_{T_s\fs{S}} =0 $.
	It follows from Theorem \ref{th:gst} that there exist  charts
	$ \phi: U \ni s  \to U_1 \times V_1 \subset \fs{E} \times \fs{F}$
	and $ \psi: V \ni y \to U_1 $ with $ \va(U) \subset V $ satisfying
	\begin{equation*}
	\phi(s) = (\zero{\fs{E}}, \zero{\fs{F}}),\, \psi(y) = \zero{\fs{E}},\, \text{and}\,\, \phi (U \cap \fs{S}) = \set{\zero{\fs{E}}} \times V_1,
	\end{equation*}
	such that
	\begin{equation*}
	\forall (u,v) \in U_1 \times V_1, \quad (\psi \circ \va \circ \phi^{-1})(u,v)=u.
	\end{equation*}
	It follows from  $ T_s\upphi|_{T_s\fs{S}} =0 $ that 
	\begin{equation*}
	\forall f \in \fs{ F}, \quad \dd_2 \upphi \big( \phi^{-1}(\zero{\fs{E}}, \zero{\fs{F}})\big)f=0. 
	\end{equation*}
	Now, set $ \upmu = \dd_1 \upphi \big( \phi^{-1}(\zero{\fs{E}}, \zero{\fs{F}})\big) \in \fs{E}'$. Thereby,
	for $ (e,f) \in \fs{E} \times \fs{F} $, we have
	\begin{equation}
	\dd \upphi \big( \phi^{-1}(\zero{\fs{E}}, \zero{\fs{F}})\big)(e,f) =\upmu(e) = 
	\big(\upmu \circ \dd(\psi \circ \va \circ \phi^{-1}) \big)(\zero{\fs{E}}, \zero{\fs{F}})(e,f),	
	\end{equation}
	which implies 
	\begin{equation}\label{eq:100}
	\dd \upphi \big( \phi^{-1}(\zero{\fs{E}}, \zero{\fs{F}})\big) =
	\big(\upmu \circ \dd(\psi \circ \va \circ \phi^{-1}) \big)(\zero{\fs{E}}, \zero{\fs{F}}).	
	\end{equation}
	Now, let $ \mu = \upmu \circ \psi'(y) \in \fs{F}'$, hence by composing \ref{eq:100} with $ T_s\phi $ we obtain
	$ T_s \upphi = \mu \circ T_s\va $, in another words $ s $ is a critical point of $ \upphi - \mu \circ \va $.
	
	Now, suppose that $ \va $ is surjective, we prove the uniqueness of $ \mu $. Suppose there exists another $ \mu_1  \neq \mu \in \fs{F}' $ such that $ T_s \upphi = \mu \circ T_s\va = \mu_1 \circ T_s\va $. Let $ f \in \fs{F} $ be such that  
	$\langle \mu, f \rangle  \neq \langle \mu_1, f \rangle $. If, $ f = \va(e) $ for $ e \in \fs{E} $, then
	\begin{equation*}
	\langle T_s \upphi, e \rangle	=\langle \mu, \va(e) \rangle  \neq \langle \mu_1, \va(e) \rangle = \langle T_s \upphi, e \rangle,
	\end{equation*}
	which is a contradiction. 
\end{proof}
Now, suppose that $ \fs{S}  $ is a submanifold of $\fs{F}$ modeled on $ \fs{F}_1 $ and $ \va \pitchfork \fs{S} $. If condition \eqref{s.1} is fulfilled at some point $ s \in \va^{-1}(\fs{S}) $,
then by Corollary \ref{cor:2}, $ \bl{S}=\va^{-1}(\fs{S}) $ is  a Keller's $ C_c^1 $-\fr submanifold of $ \fs{E} $.
Consider a Keller's $ C_c^1 $-mapping $ \upphi : \fs{E} \to \rr $ and a point $ s  $ in $ \bl{S} $. 
As $ T_{\va(x)} \fs{S}$ is complemented in $ T_{\va(s)}\fs{F} $, we have 
\begin{equation*}
T_{\va(s)}\fs{F} =  T_{\va(s)}\fs{S} \oplus \bl{F}_{\va(s)},
\end{equation*}
where $ \bl{F}_{\va(s)} $ is one of its complements. 
With this setup, we proceed to characterize critical points of $ \upphi|_{\bl{S}} $ in the next theorem.
\begin{theorem}	Suppose that $ \upphi: \fs{E} \to \rr $ satisfies condition \eqref{s.1} at some point of $ \bl{S} $.
	A point $ s \in \bl{S} $ is a critical point of $ \upphi|_{\bl{S}} $ if and only if there exists 
	$ \mu \in \bl{F}_{\va(s)}' $  such that $ T_s\upphi = \mu \circ \Pr \circ T_s\va $, where $ \Pr: T_{\va(s)}\fs{F} \to  \bl{F}_{\va(s)}$ is the projection. 
\end{theorem}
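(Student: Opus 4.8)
The plan is to straighten $\fs{S}$ near $\va(s)$ by means of the transversality chart, thereby turning $\bl{S}=\va^{-1}(\fs{S})$, locally around $s$, into a regular level set of an $\fs{F}_2$-valued map, and then to invoke the Lagrange multiplier theorem already proved (Theorem~\ref{th:lm}). \emph{Sufficiency} is the easy half. Suppose $\mu\in\bl{F}_{\va(s)}'$ satisfies $T_s\upphi=\mu\circ\Pr\circ T_s\va$. By Corollary~\ref{cor:2} we have $T_s\bl{S}=(T_s\va)^{-1}(T_{\va(s)}\fs{S})$, so for $v\in T_s\bl{S}$ the element $T_s\va(v)$ lies in $T_{\va(s)}\fs{S}=\ker\Pr$, whence $T_s\upphi(v)=\mu(0)=0$; thus $T_s\upphi|_{T_s\bl{S}}=0$ and $s$ is a critical point of $\upphi|_{\bl{S}}$. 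The same computation shows that $\ker(\Pr\circ T_s\va)=T_s\bl{S}$ for \emph{every} choice of complement $\bl{F}_{\va(s)}$ of $T_{\va(s)}\fs{S}$; this will be used at the end.

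For \emph{necessity}, assume $T_s\upphi|_{T_s\bl{S}}=0$. Since $\va\pitchfork\fs{S}$, condition~\eqref{c:t} (applied with the point $s$) furnishes a chart $(U,\phi)$ at $\va(s)$, $\phi:U\to U_1\times U_2\subset\fs{F}_1\times\fs{F}_2$, with $\phi(\va(s))=(\zero{\fs{F}_1},\zero{\fs{F}_2})$ and $\phi(\fs{S}\cap U)=U_1\times\set{\zero{\fs{F}_2}}$, together with an open neighbourhood $\bl{U}\ni s$ on which $g\coloneqq\Pr_2\circ\phi\circ\va|_{\bl{U}}:\bl{U}\to\fs{F}_2$ is infinitesimally surjective with split kernel. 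Exactly as in the proof of Corollary~\ref{cor:2}, $\zero{\fs{F}_2}$ is a regular value of $g$, $g^{-1}(\zero{\fs{F}_2})=\va^{-1}(\fs{S}\cap\bl{U})$, and $T_sg=\Pr_2\circ T_{\va(s)}\phi\circ T_s\va$ has $\ker T_sg=T_s\bl{S}$. Hence $T_s\upphi$ annihilates $\ker T_sg$, and running the necessity argument of Theorem~\ref{th:lm} — which is local at $s$ — for the map $g$ and the functional $\upphi$ produces $\nu\in\fs{F}_2'$ with $T_s\upphi=\nu\circ T_sg=\nu\circ\Pr_2\circ T_{\va(s)}\phi\circ T_s\va$.

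It remains to rewrite $\nu\circ\Pr_2\circ T_{\va(s)}\phi$ in the asserted form. The subspace $\bl{F}_0\coloneqq(T_{\va(s)}\phi)^{-1}\big(\set{\zero{\fs{F}_1}}\times\fs{F}_2\big)$ is a complement of $T_{\va(s)}\fs{S}$ in $T_{\va(s)}\fs{F}$, the linear map $\Pr_2\circ T_{\va(s)}\phi$ has kernel $T_{\va(s)}\fs{S}$ and restricts to a topological isomorphism $\bl{F}_0\to\fs{F}_2$, and therefore $\Pr_2\circ T_{\va(s)}\phi=\big(\Pr_2\circ T_{\va(s)}\phi|_{\bl{F}_0}\big)\circ\Pr_0$, where $\Pr_0:T_{\va(s)}\fs{F}\to\bl{F}_0$ is the projection along $T_{\va(s)}\fs{S}$. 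Putting $\mu\coloneqq\nu\circ\Pr_2\circ T_{\va(s)}\phi|_{\bl{F}_0}\in\bl{F}_0'$ gives $T_s\upphi=\mu\circ\Pr_0\circ T_s\va$. If the complement $\bl{F}_{\va(s)}$ appearing in the statement is a different complement, one transports $\mu$ along the canonical isomorphism $\bl{F}_0\to\bl{F}_{\va(s)}$ (restriction of the projection onto $\bl{F}_{\va(s)}$), and since $\Pr\circ T_s\va$ and $\Pr_0\circ T_s\va$ have the same kernel $T_s\bl{S}$ the identity $T_s\upphi=\mu\circ\Pr\circ T_s\va$ is preserved.

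The step I expect to be the main obstacle is the bookkeeping at the interface: namely, verifying that the conclusion is independent of the choice of complement of $T_{\va(s)}\fs{S}$ in $T_{\va(s)}\fs{F}$, and checking that the hypotheses needed to run Theorem~\ref{th:lm} for the auxiliary map $g$ — that $\zero{\fs{F}_2}$ is a regular value of $g$ and that the relevant Palais--Smale-type condition~\eqref{s.1} holds along $g^{-1}(\zero{\fs{F}_2})$ — are genuinely supplied by the transversality hypothesis~\eqref{c:t} and the standing assumption~\eqref{s.1} on $\upphi$.
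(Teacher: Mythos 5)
Your proposal is correct and follows essentially the same route as the paper: straighten $\fs{S}$ via the (transversality) chart, apply the necessity/sufficiency argument of Theorem~\ref{th:lm} to the composite $\Pr_2\circ(\text{chart})\circ\va$, and then conjugate by the tangent map of the chart to produce $\mu$ and the projection $\Pr$ (the paper in fact \emph{defines} $\Pr$ as $(T_{\va(s)}\psi)^{-1}|_{\bl{F}_{\va(s)}}\circ\Pr_2\circ T_{\va(s)}\psi$, exactly the $\Pr_0$ of your construction). Your extra bookkeeping — checking independence of the choice of complement of $T_{\va(s)}\fs{S}$ and verifying the hypotheses of Theorem~\ref{th:lm} for the auxiliary map — is left implicit in the paper but is consistent with it.
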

\begin{proof}
	By Corollary \ref{cor:2},  there exist charts 
	\begin{equation*}
	\phi : U \to U_1 \times U_2  \subset \fs{E}_1 \times \fs{E}_2\,\, \text{and}\,\, 
	\psi : V \to U_1 \times V_1 \subset \fs{E}_1 \times \fs{F} 
	\end{equation*}
	with $ \va(U) \subset V$, satisfying
	\begin{equation*}
	\va(s) = (\zero{\fs{E}_1}, \zero{\fs{E}_2}),\, \va (U \cap \bl{S})=\set{\zero{\fs{E}_1}}\times \zero{\fs{E}_2},\,\,
	\text{and}\,\, \psi (\va (s)) = (\zero{\fs{E}_1}, \zero{ \fs{F}}),
	\end{equation*}
	such that
	\begin{equation*}
	\forall (u,v) \in U_1 \times V_1, \quad (\psi \circ \va \circ \phi^{-1})(u,v) = (u, \lambda(u,v)),
	\end{equation*}
	where $ \lambda : U_1 \times V_1 \to V_1 $ is a Keller's $ C_c^1 $-mapping such that \dd $(\lambda \circ\psi^{-1})(s)=0$. Consider the mapping $ \uppi \coloneqq \Pr_{2} \circ \psi \circ \va \circ \phi^{-1} $,
	where $ \Pr_{2}: \fs{E}_1 \times \fs{F} \to \fs{E}_1 $ is the canonical projection. Applying Theorem \ref{th:lm} to
	$ \uppi $ implies that $ (\zero{\fs{E}_1}, \zero{\fs{E}_2}) $ is a critical point
	of $ \upphi|_{\zero{\fs{E}_1} \times U_2} $ if and only if there exists $ \upmu \in \fs{E}_1' $ such that
	\begin{equation}\label{eq:lm1}
	\dd (\upphi \circ \phi^{-1})(\zero{\fs{E}_1},\zero{\fs{E}_2}) = \upmu \circ \mathrm{Pr}_{2} \circ \dd (\psi \circ \va \circ \phi^{-1}) (\zero{\fs{E}_1},\zero{\fs{E}_2}).
	\end{equation}
	Here, $ \dd (\upphi \circ \phi^{-1})(\zero{\fs{E}_1},\zero{\fs{E}_2}) $ represents the derivative of $ \upphi $ composed with the inverse of $ \phi $ at the point $ (\zero{\fs{E}_1},\zero{\fs{E}2}) $, and $ \upmu \circ \mathrm{Pr}_{2} \circ \dd (\psi \circ \va \circ \phi^{-1}) (\zero{\fs{E}_1},\zero{\fs{E}_2}) $ involves the derivative of $ \psi \circ \va \circ \phi^{-1} $ at the same point, projected onto the second component of the target space.
	
	If we let $ \mu = \upmu \circ T_{\va(s)}\psi \in \bl{F}_{\va(s)}'  $ and compose $ T_s\upphi $ on the the right to Equation \eqref{eq:lm1}, and  define the projection operator $ \Pr $ as:
	\begin{equation*}
	\Pr = (T_{\va(s)}\psi)^{-1}|_{\bl{F}_{\va(s)}} \circ \mathrm{Pr}_{2} \circ T_{\va(s)}\psi : T_{\va(s)}\fs{F} \to \bl{F}_{\va(s)},
	\end{equation*}
	then we obtain $ T_s\upphi = \mu \circ \Pr \circ T_s\va $.
\end{proof}
Next, let's consider a special case when $ \fs{F} = \rr $. Suppose $ \upphi: \fs{E} \to \rr $ is a Keller's $ C^1_c $-mapping and $ \bl{S} $ is a submanifold of $ \fs{E} $. If a point $ s \in \bl{S} $ is a critical point of $ \upphi $, then it is also a critical point of $ \upphi|_{\bl{S}} $. However, the converse is not true in general. To identify critical points of $ \phi $, we can search for a submanifold of $ \fs{E} $ such that both $ \upphi $ and its restriction to the submanifold share the same critical points. This approach is known as the Nehari method.

Consider the following subset of $ \fs{E} $:
\begin{equation*}
\mathcal{N}\coloneqq \Set{e \in \fs{E}\setminus \set{\zero{\fs{E}}}}{\langle \upphi'(e),e \rangle = 0}.
\end{equation*}
This set known as a Nehari manifold, although it is not a manifold in general. To turn it into a submanifold of $ \fs{E} $, we impose the following conditions:
\begin{enumerate}[label={\bf (N\arabic*)},ref=N\arabic*]
	\item \label{eq:1000} there exists an open neighborhood $ U $ of zero such that $ U \cap \mathcal{N} =\emptyset$, 
	\item \label{eq:10001} the function $ \upphi $ is of class Keller's $ C^2_c $, and
	\begin{equation}
	\forall x \in \mathcal{N}, \quad \langle \upphi''(x)x,x\rangle \neq 0.
	\end{equation}	
\end{enumerate}
Define the Keller's $ C^1_c $-mapping $ \va(e)= \langle \upphi'(e),e\rangle $ on $ \fs{E} $. 
Then, $ \mathcal{N} = \va^{-1}(0)\setminus\set{\zero{\fs{E}}} $, and for any $ x \in \mathcal{N} $, we have
\begin{equation}\label{eq:2000}
\langle \va'(x),x \rangle = 	\langle\upphi''(x)x,x \rangle +	\langle \upphi'(x),x\rangle = 	\langle \upphi''(x)x,x \rangle \neq 0.
\end{equation}
Thus, for all $ x\in \mathcal{N} $, we have $ \va'(x)  \neq 0 $, which along with	\eqref{eq:1000} implies that
$ \mathcal{N} $ is a Keller's $ C^1_c $-submanifold of $ \fs{E} $.

Suppose that $ \upphi: \fs{E} \to \rr $ satisfies condition \eqref{s.1} at some point of $ \mathcal{N} $.
If $ s $ is a critical point of $ \upphi|_{\mathcal{N}} $, then by Theorem \eqref{th:lm}, there exists
$ \mu \in \rr $ such that:
\begin{equation*}
\upphi'(s) = \mu \va'(s).
\end{equation*}
Therefore, $ \langle \upphi'(s),s \rangle = \mu \langle \va'(s),s \rangle =\va(s)=0$. Thus, it follows from
\eqref{eq:2000} that $ \mu =0 $, and hence $ \upphi'(s)=0 $. Therefore, we have proved the following theorem.
\begin{theorem} \label{th:ls}
	Let $ \upphi: \fs{E} \to \rr $ be a Keller's $ C^2_c $-mapping, and $ \mathcal{N} $ the Nehari
	manifold of $ \upphi $ that satisfies \eqref{eq:1000} and \eqref{eq:10001}. If $ \upphi $ satisfies condition 
	\eqref{s.1} at some point of $ \mathcal{N} $, then $ \upphi $ and $ \upphi|_{\mathcal{N}} $ have the same critical points.
\end{theorem}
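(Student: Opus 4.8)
The plan is to recast the problem as a constrained critical point problem and feed it into the Lagrange multiplier theorem (Theorem \ref{th:lm}). Put $\va(e) = \langle \upphi'(e), e\rangle$. Since $\upphi$ is of class Keller's $C_c^2$, the map $\va : \fs{E} \to \rr$ is of class Keller's $C_c^1$, and by definition $\mathcal{N} = \va^{-1}(0) \setminus \{\zero{\fs{E}}\}$. The first step is to verify that $\mathcal{N}$ is genuinely a Keller's $C_c^1$-submanifold of $\fs{E}$, so that ``critical point of $\upphi|_{\mathcal{N}}$'' is meaningful and Theorem \ref{th:lm} applies.

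To this end I would compute $\langle \va'(x), h\rangle = \langle \upphi''(x)h, x\rangle + \langle \upphi'(x), h\rangle$ for $x, h \in \fs{E}$, so that for $x \in \mathcal{N}$ one gets $\langle \va'(x), x\rangle = \langle \upphi''(x)x, x\rangle \neq 0$ by \eqref{eq:10001} (this is exactly \eqref{eq:2000}). Hence $\va'(x) \neq 0$ for every $x \in \mathcal{N}$, so $0$ is a regular value of $\va$ on the open set $\fs{E} \setminus \{\zero{\fs{E}}\}$: the tangent map is onto $\rr$, and $\ker \va'(x)$, a closed hyperplane, always splits (choose $v$ with $\langle \va'(x), v\rangle = 1$; then $x \mapsto x - \langle \va'(x), x\rangle v$ is a continuous projection onto it). Condition \eqref{eq:1000} keeps $\mathcal{N}$ away from $\zero{\fs{E}}$, so, applying the (local) argument of Corollary \ref{cor:1} to $\va$ on $\fs{E}\setminus\{\zero{\fs{E}}\}$, with condition \eqref{s.1} supplied at some point of $\mathcal{N}$ by hypothesis, $\mathcal{N}$ is a Keller's $C_c^1$-submanifold of $\fs{E}$ with $T_x\mathcal{N} = \ker \va'(x)$.

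Next I would settle the two inclusions. If $s \in \mathcal{N}$ is a free critical point of $\upphi$, i.e. $\upphi'(s) = 0$, then $T_s\upphi|_{T_s\mathcal{N}} = 0$ trivially, so $s$ is a critical point of $\upphi|_{\mathcal{N}}$ (and conversely any free critical point $s \neq \zero{\fs{E}}$ automatically lies in $\mathcal{N}$, since $\langle \upphi'(s), s\rangle = 0$). For the nontrivial direction, let $s \in \mathcal{N}$ be a critical point of $\upphi|_{\mathcal{N}}$. As $\fs{F} = \rr$ here, $\fs{F}' \cong \rr$, and Theorem \ref{th:lm} (applied with constraint map $\va$ and regular value $0$) yields $\mu \in \rr$ such that $s$ is a critical point of $\upphi - \mu\va$, i.e. $\upphi'(s) = \mu\,\va'(s)$. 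Pairing with $s$ and using $\va(s) = 0$,
\[ 0 = \langle \upphi'(s), s\rangle = \mu\,\langle \va'(s), s\rangle = \mu\,\langle \upphi''(s)s, s\rangle, \]
and since \eqref{eq:10001} makes $\langle \upphi''(s)s, s\rangle \neq 0$, we conclude $\mu = 0$, hence $\upphi'(s) = 0$. Thus $s$ is a free critical point of $\upphi$, and the two sets of critical points coincide.

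I expect the main obstacle to be the bookkeeping around the hypotheses of Corollary \ref{cor:1} and Theorem \ref{th:lm} rather than any hard estimate: $0$ is a regular value of $\va$ only once $\zero{\fs{E}}$ is removed from the domain (which is the whole point of \eqref{eq:1000}), and one must be careful that condition \eqref{s.1} is imposed here on the constraint map $\va = \langle \upphi'(\cdot), \cdot\rangle$, not on $\upphi$ itself. Once the submanifold structure is established and Theorem \ref{th:lm} is in force, the elementary identity $\langle \va'(s), s\rangle = \langle \upphi''(s)s, s\rangle$ together with \eqref{eq:10001} finishes the argument with no further work.
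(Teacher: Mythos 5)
Your proposal is correct and follows essentially the same route as the paper: introduce the constraint map $\va(e)=\langle\upphi'(e),e\rangle$, use \eqref{eq:1000} and the identity $\langle\va'(x),x\rangle=\langle\upphi''(x)x,x\rangle\neq 0$ to make $\mathcal{N}$ a Keller's $C_c^1$-submanifold, then apply Theorem \ref{th:lm} and pair with $s$ to force $\mu=0$. The extra details you supply (the splitting of the hyperplane $\ker\va'(x)$, the trivial converse inclusion, and the remark that condition \eqref{s.1} really concerns the constraint map $\va$ rather than $\upphi$ itself) are sensible clarifications of points the paper leaves implicit.
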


\end{document}